
\documentclass{amsart}


\usepackage{mathtools}


\DeclarePairedDelimiter\abs{\lvert}{\rvert}
\DeclarePairedDelimiter\norm{\lVert}{\rVert}  

\let\oldforall\forall
\renewcommand{\forall}{\oldforall \, }

\let\oldexist\exists
\renewcommand{\exists}{\oldexist \: }

\providecommand\given{}
\newcommand\SetSymbol[1][]{%
\nonscript\:#1\vert
\allowbreak
\nonscript\:
\mathopen{}}
\DeclarePairedDelimiterX\Set[1]\{\}{%
\renewcommand\given{\SetSymbol[\delimsize]}
#1
}

\DeclarePairedDelimiterXPP\lnorm[1]{}\lVert\rVert{_2}{#1}

\DeclarePairedDelimiterX{\Parn}[1](){#1} 


    \newcommand{\tr}{\operatorname{tr}}

    \newcommand{\hodge}{{*}}



        
        \newcommand{\del}{\partial}
        


        \newcommand{\vol}{\mathrm{vol}}






        \newcommand{\R}{\mathbb{R}}
        \newcommand{\C}{\mathbb{C}}

        \newcommand{\mc}[1]{\mathcal{#1}}

        \renewcommand{\epsilon}{\varepsilon}



\usepackage{enumitem}


\usepackage[dvipsnames]{xcolor}
\usepackage[colorlinks]{hyperref}
\hypersetup{
    linkcolor=BrickRed,
    citecolor=Green,
    filecolor=Mulberry,
    urlcolor=NavyBlue,
    menucolor=BrickRed,
    runcolor=Mulberry
    }
    
\usepackage[
        noabbrev,
        capitalise,
        nameinlink,
    ]
    {cleveref}
\crefname{lem}{Lemma}{Lemma}
\crefname{prop}{Proposition}{Proposition}



\usepackage{todonotes}






\newtheoremstyle{bfnoteonly}%
{}{}%
{\itshape}{}%
{\bfseries}{.}%
{ }%
{\thmnote{#3}}

\theoremstyle{bfnoteonly}

\theoremstyle{plain}
\newtheorem{thm}{Theorem}[section]
\newtheorem*{thm*}{Theorem}
\newtheorem{prop}[thm]{Proposition}
\newtheorem{lem}[thm]{Lemma}
\newtheorem{cor}[thm]{Corollary}




\theoremstyle{definition}
\newtheorem{defn}[thm]{Definition}

\newtheorem{rem}[thm]{Remark}
\newtheorem{conj}[thm]{Conjecture}
\newtheorem*{conj*}{Conjecture}
\newtheorem*{rem*}{Remark}



\theoremstyle{remark}


\numberwithin{equation}{section}




\usepackage{diffcoeff}[=v4]
\diffdef {}
{
op-order-sep = 0 mu
}




\begin{document}


\title[Generalized Monge-Amp\`{e}re equation]{On a generalized Monge-Amp\`{e}re equation on closed almost K\"{a}hler surfaces}


\author{Ken Wang}
\thanks{Supported by NSFC Grants 1197112  }

\address{School of Mathematical Sciences,
Fudan University,
Shanghai 100433,
China}
\email{kanwang22@m.fudan.edu.cn.}


\author{Zuyi Zhang}
\thanks{}

\address{Beijing International Center for Mathematical Research,
China}
\email{zhangzuyi1993@hotmail.com}



\author{Tao Zheng}
\thanks{Supported by NSFC Grants 12371078}

\address{School of Mathematics and Statistics, Beijing Institute of Technology, Beijing 100081,
China}
\email{zhengtao08@amss.ac.cn.}


\author{Peng Zhu}
\thanks{Supported by NSFC Grants 12171417}
\address{School of Mathematics and Physics, Jiangsu University of Technology, Changzhou, Jiangsu 213001, China}
\email{zhupeng@jsut.edu.cn.}


\keywords{almost K\"ahler form, $\mc{D}^+_J$ operator, generalized Monge-Amp`ere equation}


 \subjclass{53D35;53C56,53C65;32Q60}


\begin{abstract}
We show the existence and uniqueness of solutions to a generalized Monge-Amp\`{e}re equation on closed almost K\"{a}hler surfaces, where the equation depends only on the underlying almost Kähler structure. As an application, we prove Donaldson’s conjecture for tamed almost complex 4-manifolds.
\end{abstract}


\maketitle


\section{Introduction}

Yau's Theorem \cite{Yau78} for the Calabi conjecture \cite{Calabi57}, proven forty years ago, occupies a central place in the theory of K\"ahler manifolds and has wide-ranging applications in geometry and mathematical physics \cite{FuYau08,Yau77}. \par


The theorem is equivalent to finding a K\"ahler metric within a given K\"ahler class that has a prescribed Ricci form. In other words, this involves solving the complex Monge-Amp\`ere equation for K\"ahler manifolds:
\begin{equation}
            (\omega + \sqrt{-1}\partial_J \bar{\partial}_J \varphi)^n = e^{f}\omega^n
\end{equation}
for a smooth real function $\varphi$ satisfying $\omega + \sqrt{-1}\partial_J \bar{\partial}_J \varphi > 0,$ and $\sup_M \varphi= 0$, where $n$ is the complex dimension of $M$ and $f$ is any smooth real function with 
\begin{equation*}
    \int_M e^f \omega^n = \int_M \omega^n.
\end{equation*}
\par

There has been significant interest in extending Yau's Theorem to non-K\"ahler settings. One extension of Yau's Theorem, initiated by Cherrier \cite{Cherrier87} in the 1980s, involves removing the closedness condition $d\omega = 0$. See also Tosatti-Weinkove \cite{TosattiW10}, and Fu-Yau \cite{FuYau08}. The Monge-Amp\`ere equation on almost Hermitian manifolds was studied by Chu-Tosatti-Weinkove \cite{Chu20,ChuTW19}. A different extension on symplectic manifolds was explored by Weinkove \cite{Weinkove07} and Tosatti-Weinkove-Yau \cite{TosattiWY08}, who studied the Calabi-Yau equation for 1-forms. Delan\"oe \cite{Delanoe96} and Wang-Zhu \cite{WangZ10} considered a Gromov type Calabi-Yau equation. \par

This paper focuses on a generalized Monge-Amp\`ere equation on almost K\"ahler surfaces and establishes a uniqueness and existence theorem for it. Here is the main theorem:
\begin{thm}
\label{thm:1}
    Suppose that $(M,\omega,J,g)$ is a closed almost K\"ahler surface, then there exists a unique solution, $\varphi \in C^{\infty}(M,J)_0$, of the generalized Monge-Amp\`ere equation 
    \begin{equation} \label{eq:1.2}
        (\omega + \mathcal{D}_J^+(\varphi))^2 = e^{f}\omega^2,
    \end{equation}
     for $\varphi$ satisfying $\omega + \mc{D}_J^+(\varphi) > 0$, where $f$ is any smooth real function with
     \begin{equation*}
         \int_M \omega^2 = \int_M e^{f}\omega^2
     \end{equation*}
     and there is a $C^{\infty}$ $a\ priori$ bound of $\varphi$ depending only on $\omega,J,$ and $f$. 
\end{thm}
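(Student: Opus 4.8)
The plan is to follow the classical template for Monge--Amp\`ere--type equations: prove uniqueness by a maximum principle, then prove existence by the continuity method, the latter reducing to a chain of a priori estimates. I begin with uniqueness, which is the easy half. If $\varphi_1,\varphi_2\in C^\infty(M,J)_0$ both solve \eqref{eq:1.2} and $u:=\varphi_1-\varphi_2$, subtracting the two equations and factoring the difference of squares of $2$-forms gives
\begin{equation*}
0=(\omega+\mc D_J^+\varphi_1)^2-(\omega+\mc D_J^+\varphi_2)^2=\big((\omega+\mc D_J^+\varphi_1)+(\omega+\mc D_J^+\varphi_2)\big)\wedge\mc D_J^+u.
\end{equation*}
Since $\mc D_J^+$ is of second order with no zeroth-order contribution (it is linear in its argument) and the $2$-form $\Omega:=(\omega+\mc D_J^+\varphi_1)+(\omega+\mc D_J^+\varphi_2)$ is positive, dividing by $\omega^2$ rewrites the right-hand side as $Lu$ for a linear second-order operator $L$ without zeroth-order term whose principal symbol is the pairing of the positive form $\Omega$ with the symbol of $\mc D_J^+$, hence positive definite. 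Thus $L$ is elliptic, $Lu=0$, the strong maximum principle on the connected $M$ forces $u$ to be constant, and the normalization built into $C^\infty(M,J)_0$ gives $u\equiv 0$.

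For existence I run the continuity method on the family of equations $(\omega+\mc D_J^+\varphi)^2=e^{f_t}\omega^2$ with $\omega+\mc D_J^+\varphi>0$ and $\varphi\in C^\infty(M,J)_0$, where $e^{f_t}=c_t\big((1-t)+te^{f}\big)$ and $c_t>0$ is the constant enforcing $\int_M e^{f_t}\omega^2=\int_M\omega^2$, so that $f_0\equiv 0$ (solved by $\varphi\equiv0$) and $f_1=f$. Let $T\subseteq[0,1]$ be the set of parameters for which the problem is solvable; then $0\in T$. Openness of $T$ follows from the implicit function theorem in H\"older spaces: linearizing at a solution $\varphi_t$ yields $\psi\mapsto 2(\omega+\mc D_J^+\varphi_t)\wedge\mc D_J^+\psi\,/\,\omega^2$, which by the same symbol computation as above is elliptic (using $\omega+\mc D_J^+\varphi_t>0$) with no zeroth-order term; its kernel is the constants, it is Fredholm of index zero, and so it restricts to an isomorphism between the normalized slices of source and target, giving a solution for all $t$ near any $t\in T$.

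The heart of the matter, and the main obstacle, is closedness of $T$, i.e. a priori estimates for solutions along the family that depend only on $\omega$, $J$ and $f$. They are established in the customary order. First, the zeroth-order estimate $\mathrm{osc}_M\varphi\le C$: here the almost-K\"ahler setting blocks the pluripotential arguments and the $\partial\bar\partial$-lemma available in the K\"ahler case, so this bound must be produced directly, either by a Moser iteration built on an integral identity for $\mc D_J^+$ in the spirit of Weinkove and Tosatti--Weinkove--Yau's analysis of the Calabi--Yau equation, or by an Alexandrov--Bakelman--Pucci argument for concave elliptic equations as developed by B\l ocki and Sz\'ekelyhidi. Second, the second-order estimate $\tr_\omega(\omega+\mc D_J^+\varphi)\le C$ (equivalently $\Delta\varphi\le C$), obtained by applying the maximum principle to a quantity of Aubin--Yau type such as $\log\tr_\omega(\omega+\mc D_J^+\varphi)-A\varphi$: the computation produces, besides the curvature of $g$, extra first-order terms coming from the non-integrability of $J$ (the Nijenhuis tensor, $\nabla J$), which are controlled by the fixed geometry and, for $A$ large, absorbed by the good negative term $-A\,\tr_{\tilde\omega}\omega$ after Cauchy--Schwarz. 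Together with the equation this pinches the two eigenvalues of $\tilde\omega:=\omega+\mc D_J^+\varphi$ relative to $\omega$ into a fixed interval $[c,C]$, so the equation is uniformly elliptic along the whole family and $\varphi\in C^{1,1}$ with bound; a $C^1$ bound then follows by interpolation (or directly from a B\l ocki-type gradient estimate). Finally, writing the equation locally as $F(\mathrm{Hess}\,\varphi+\text{lower order})=e^{f_t}$, where $F$ is the logarithm of the product of those two eigenvalues and hence concave in the Hessian, the Evans--Krylov theorem gives a $C^{2,\alpha}$ bound, and Schauder bootstrapping yields $C^{k,\alpha}$ bounds for every $k$, depending only on $\omega$, $J$, $f$.

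Hence $T$ is non-empty, open and closed in $[0,1]$, so $T=[0,1]$ and $1\in T$: equation \eqref{eq:1.2} is solvable, the solution is unique by the first paragraph, and the estimates above are precisely the claimed $C^\infty$ a priori bound. I expect the real work to be concentrated in two places: the zeroth-order estimate, since $\mc D_J^+$ is not literally $\sqrt{-1}\partial_J\bar\partial_J$ and the usual complex-analytic shortcuts are unavailable; and the bookkeeping of the $\nabla J$--torsion terms in the second-order estimate, where one must also verify that positivity of $\omega+\mc D_J^+\varphi$ --- which is what makes $\mc D_J^+$ elliptic in the first place --- is preserved all along the continuity path.
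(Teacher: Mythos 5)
Your overall architecture (continuity method; openness by linearizing; closedness via $C^0\to C^2\to C^{2,\alpha}\to C^\infty$ estimates) matches the paper's, but two of your key steps fail because they implicitly treat $\mc{D}_J^+\varphi$ as a local expression in the $2$-jet of $\varphi$, which it is not. By construction $\mc{D}_J^+\varphi = d\mc{W}_J(\varphi) = dJd\varphi + dd^*\sigma(\varphi)$, where the $J$-anti-invariant form $\sigma(\varphi)$ is obtained by inverting Lejmi's elliptic operator $P$ on $\Omega_J^-$ so as to enforce $d_J^-\mc{W}_J(\varphi)=0$; thus $\mc{D}_J^+$ is a \emph{nonlocal} (pseudo-differential) operator. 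Consequently your uniqueness argument does not go through: the operator $u\mapsto \Omega\wedge\mc{D}_J^+u/\omega^2$ is not a second-order local elliptic operator, and the strong maximum principle cannot be invoked for it (the same caveat applies to reading the linearization in the openness step as a scalar elliptic operator; the paper instead works with the determined first-order elliptic system for the $1$-form $u=\mc{W}_J(\varphi)$, namely $d_J^-u=0$, $d^*u=0$, $(\omega+du)^2=e^f\omega^2$). The paper proves uniqueness by a global integral argument: if $\Omega:=2\omega+\mc{D}_J^+(\varphi_1+\varphi_2)>0$ and $\Omega\wedge\mc{D}_J^+u=0$, then by $\Lambda_J^+=\R\,\Omega\oplus\Lambda_{g_\Omega}^-$ the form $\mc{D}_J^+u$ is anti-self-dual for the metric induced by $\Omega$, so $0=\int_M\mc{D}_J^+u\wedge\mc{D}_J^+u=-\norm{\mc{D}_J^+u}^2$, whence $\mc{D}_J^+u=0$; combined with $d^*\mc{W}_J(u)=0$ this gives $\mc{W}_J(u)=0$ and $u$ constant. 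You should replace your maximum-principle paragraph with an argument of this kind.

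The second gap is your $C^{2,\alpha}$ step. Evans--Krylov is not applicable here: the equation cannot be written as $F(\text{Hessian of }\varphi+\text{lower order})=e^{f}$ with $F$ concave in the Hessian, precisely because $\omega+\mc{D}_J^+\varphi$ contains the term $dd^*\sigma(\varphi)$ and so is not determined pointwise by the second derivatives of $\varphi$. This is the same obstruction that forced Weinkove and Tosatti--Weinkove--Yau to prove a Calabi-type \emph{third-order} estimate for the Calabi--Yau equation on symplectic $4$-manifolds, and the paper follows that route: after the trace bound it estimates $S=\tfrac14\abs{\nabla_g g_1}_{g_1}^2$ by a maximum-principle computation and then bootstraps through the elliptic system above via Schauder. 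A related, smaller inaccuracy: in the second-order estimate the correct test quantity is $\log\tr_g g_1-2A'\varphi_1$, where $\varphi_1$ is the auxiliary potential defined by $-\tfrac12\Delta_{g_1}\varphi_1=\omega_1\wedge(\omega_1-\omega)/\omega_1^2$ (so that $\Delta_{g_1}\varphi_1=4-\tr_{g_1}g$ exactly, producing the good term that absorbs $-A'\tr_{g_1}g$); using $\varphi$ itself, as in your $\log\tr_\omega\tilde\omega-A\varphi$, does not yield this cancellation, again because $\Delta_{g_1}\varphi$ is not a trace of $g$ against $g_1$ when $\mc{D}_J^+\neq dJd$. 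Accordingly, the $C^0$ bound must first be established for $\varphi_1$, with the bound on $\varphi$ recovered afterwards from the resulting two-sided control of $\Delta_g\varphi$.
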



%

We explain some of the notations used in the main theorem. The operator $\mathcal{D}_J^+$, introduced by Tan-Wang-Zhou-Zhu \cite{TanWZZ22}, generalizes $\del_J\bar{\del}_J$. Specifically, if $J$ is integrable, $\mathcal{D}_J^+$ reduces to $2\sqrt{-1}\del_J\bar{\del}_J$. Therefore, it can be viewed as a generalization of $\del_J\bar{\del}_J$. Using the operator $\mathcal{D}_J^+$, Tan-Wang-Zhou-Zhu \cite{TanWZZ22} resolved the Donaldson tameness question. Moreover, Wang-Wang-Zhu \cite{WangWZ23} derived a Nakai-Moishezon criterion for almost complex 4-manifolds. Recall that for a closed almost K\"ahler surface $(M,\omega,J,g)$, the inequality $0 \leq h_J^- \leq b^+ - 1$ holds (\cite{TanWZZ15, TanWZZ22}). Observe that the intersection of $H_J^+$ and $H_g^+$ is spanned by ${\omega, f_i \omega + d_J^-(\nu_i + \bar{\nu}_i)}$, where $\nu_i \in \Omega_J^{0,1}(M)$ and
\begin{equation*}
    \int_M f_i \omega^2 = 0
\end{equation*}
for $1 \leq i \leq b^+ - h_J^- - 1$. Note that the kernel of $\mathcal{W}_J$ is spanned by $\Set{1, f_i,\ 1 \leq i \leq b^+- h_J^- -1}$. Let
$    C^{\infty}(M,J)_0 := C^{\infty}(M)_0 \setminus \mathrm{Span}\ \{f_i,\ 1 \leq i \leq b^+- h_J^- -1\},$
where 
\begin{equation*}
    C^{\infty}(M)_0 := \Set{f \in C^{\infty}(M) \given \int_M f\omega^2 = 0}.
\end{equation*}
Thus, $C^{\infty}(M,J)_0 \subset C^{\infty}(M)_0$; they are equal if $h_J^- = b^+-1$.

Donaldson posted the following conjecture (see Donaldson \cite[Conjucture~1]{Donaldson06} or Tosatti-Weinkove-Yau \cite[Conjecture~1.1]{TosattiWY08}):
\begin{conj}
    Let $M$ be a compact 4-manifold equipped with an almost complex structure $J$ and a taming symplectic form $\Omega$. Let $\sigma$ be a smooth volume form on $M$ with
    \begin{equation*}
        \int_M \sigma = \int_M \Omega^2
    \end{equation*}
    Then if $\Tilde{\omega}$ is a almost K\"ahler form with $[\Tilde{\omega}] = [\Omega]$ and solving Calabi-Yau equation
    \begin{equation}
        \Tilde{\omega}^2 = \sigma,
    \end{equation}
    there are $C^{\infty}$ $a\ priori$ bounds on $\Tilde{\omega}$ depending only on $\Omega,J,$ and $M$.
\end{conj}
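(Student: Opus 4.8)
The plan is to deduce the conjecture from Theorem~\ref{thm:1}, the decisive ingredients being its \emph{uniqueness} clause (which forces the Calabi--Yau problem to have at most one solution) together with the smoothness of that solution. Assume the nontrivial case, in which an almost K\"ahler form in $[\Omega]$ exists; fix one, $\omega$, with its metric $g$, so that $(M,\omega,J,g)$ is a closed almost K\"ahler surface. Put $e^{f}:=\sigma/\omega^2\in C^\infty(M)$; then
\[
  \int_M e^{f}\omega^2=\int_M\sigma=\int_M\Omega^2=\int_M\omega^2
\]
(the last equality because $[\omega]=[\Omega]$), so $f$ satisfies the normalization required in Theorem~\ref{thm:1}.

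The key step is that the assignment $\varphi\mapsto\omega+\mc{D}_J^+(\varphi)$ is a bijection from $\{\varphi\in C^\infty(M,J)_0:\omega+\mc{D}_J^+(\varphi)>0\}$ onto the set of almost K\"ahler forms in $[\Omega]$. On the one hand $\mc{D}_J^+(\psi)$ is $d$-exact and $J$-invariant for every $\psi$ (for integrable $J$ it is $2\sqrt{-1}\partial_J\bar\partial_J\psi=d(2\sqrt{-1}\bar\partial_J\psi)$), so $\omega+\mc{D}_J^+(\varphi)$ is automatically closed, $J$-invariant and cohomologous to $\Omega$, hence almost K\"ahler once positive. On the other hand, if $\tilde\omega$ is almost K\"ahler with $[\tilde\omega]=[\Omega]$, then $\tilde\omega-\omega$ is a $d$-exact, $J$-invariant $2$-form, and by the almost-complex analogue of the $\partial\bar\partial$-lemma on closed almost K\"ahler surfaces \cite{TanWZZ22} — closely related to the cohomological facts about $H_J^+$, $H_g^+$ and $\ker\mc{W}_J$ recalled above — it lies in the image of $\mc{D}_J^+$; the potential is then unique modulo $\ker\mc{D}_J^+\cap C^\infty(M)_0=\mathrm{Span}\{f_i:1\le i\le b^+-h_J^--1\}$ (equivalently modulo $\ker\mc{W}_J=\mathrm{Span}\{1,f_i\}$), i.e.\ unique once normalized into $C^\infty(M,J)_0$.

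Under this bijection the Calabi--Yau equation $\tilde\omega^2=\sigma$ translates precisely into $(\omega+\mc{D}_J^+(\varphi))^2=e^{f}\omega^2$, i.e.\ \eqref{eq:1.2}. By Theorem~\ref{thm:1} the latter has a unique solution $\varphi\in C^\infty(M,J)_0$ with $\omega+\mc{D}_J^+(\varphi)>0$; hence the Calabi--Yau problem for $(M,J,\Omega,\sigma)$ has a unique solution $\hat\omega:=\omega+\mc{D}_J^+(\varphi)$, and any $\tilde\omega$ as in the conjecture equals $\hat\omega$. By Theorem~\ref{thm:1} there is, for each $k$, a bound $\|\varphi\|_{C^{k}}\le C_k$ depending only on $\omega$, $J$, $f$, and since $\mc{D}_J^+$ is a second-order differential operator with coefficients built from $(\omega,J)$ we get $\|\hat\omega\|_{C^{k}}\le\|\omega\|_{C^{k}}+C\|\varphi\|_{C^{k+2}}<\infty$. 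Since $\hat\omega$ is the unique almost K\"ahler form in $[\Omega]$ with square $\sigma$, it is determined by $(M,J,\Omega,\sigma)$ alone, independently of the auxiliary choice of $\omega$; thus $\|\hat\omega\|_{C^{k}}$, and therefore $\|\tilde\omega\|_{C^{k}}$ for every solution $\tilde\omega$, is a finite quantity depending only on $\Omega$, $J$, $M$ (and the prescribed volume form $\sigma$) — exactly the a priori bound of Donaldson's conjecture.

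The main obstacle is the key step above: establishing that a $d$-exact, $J$-invariant $2$-form on a closed almost K\"ahler surface is automatically $\mc{D}_J^+$-exact, and that the resulting potential can be normalized into $C^\infty(M,J)_0$. This is where the invariant $h_J^-$ and the auxiliary functions $f_i$ come in, and it is the substitute, in the non-integrable setting, for the $\partial\bar\partial$-lemma. By contrast, the apparent dependence on the reference $\omega$ evaporates once uniqueness is established; all of the analytic content is already carried by Theorem~\ref{thm:1}.
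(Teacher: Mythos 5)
Your overall strategy --- pull the Calabi--Yau equation back to the generalized Monge--Amp\`ere equation \labelcref{eq:1.2} via a reference almost K\"ahler form $\omega\in[\Omega]$ and then quote \cref{thm:1} --- is exactly the paper's, but two of your steps hide the real content. First, the step you yourself flag as ``the main obstacle'' (that $\tilde\omega-\omega$, being exact and $J$-invariant, lies in the image of $\mc{D}_J^+$ with a potential normalizable into $C^\infty(M,J)_0$) is not optional: without it the uniqueness clause of \cref{thm:1} says nothing about an arbitrary solution $\tilde\omega$ of the conjecture, so your conclusion ``any $\tilde\omega$ equals $\hat\omega$'' is unsupported. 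The paper supplies this step in Section~4: since $\tilde\omega-\omega\in\Omega_J^+\cap d(\Omega^1)$, Lejmi's operator (\cref{prop:3}) and the decomposition $\Omega_J^-=\ker P\oplus d_J^-\Omega^1_\R$ produce $\sigma(\varphi)$ solving \labelcref{eq:4.21}, whence $\tilde\omega-\omega=dJd\varphi+dd^*\sigma(\varphi)=\mc{D}_J^+(\varphi)$. A complete proof must run that argument rather than list it as an obstacle.

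Second, your treatment of the reference form is where the argument actually breaks. You ``fix one'' almost K\"ahler $\omega\in[\Omega]$ in the ``nontrivial case,'' but the only such form you are guaranteed in that case is $\tilde\omega$ itself, and the constant $C_k(\omega,J,f)$ furnished by \cref{thm:1} depends on the geometry of $\omega$; an uncontrolled choice of $\omega$ therefore yields a bound depending on that choice, not on $(\Omega,J,M)$. Your attempted repair --- the solution is unique, hence its norm is ``a quantity determined by the data'' --- is vacuous as an a priori estimate: any fixed smooth solution trivially has finite norms determined by the data, and no uniform estimate (e.g.\ along a continuity path) is produced this way. The paper resolves the issue by a canonical construction: writing $\Omega=F+d_J^-(v+\bar v)$ with $v\in\Omega_J^{0,1}(M)$ and invoking the hypothesis $h_J^-=b^+-1$ together with Tan--Wang--Zhou--Zhu's resolution of the tameness question to conclude that $\omega:=\Omega-d(v+\bar v)$ is itself almost K\"ahler, hence a reference form built from $(\Omega,J,g)$ alone, after which $f$ is replaced by $f+f_0$ with $f_0=\log(\Omega^2/\omega^2)$. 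This is also why the paper's corollary carries the hypothesis $h_J^-=b^+-1$, which your proposal silently drops; without it the existence of a controlled almost K\"ahler form in $[\Omega]$ is precisely the open tameness question, and the conjecture is only being established in that restricted setting.
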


Now let $\sigma = e^{f} \Omega^2, \Omega = F + d_J^- (v + \bar{v}),$ where $v \in \Omega_J^{0,1}(M)$. If $h_J^- = b^+ - 1$, then $\omega := \Omega - d(v+\bar{v}) = F - d_J^+(v + \bar{v})$ is an almost K\"ahler form on $M$ (cf. Tan-Wang-Zhou-Zhu \cite[Theorem~1.1]{TanWZZ22} and Wang-Wang-Zhu \cite[Theorem~4.3]{WangWZ23}). We define
\begin{equation*}
    \log \frac{\Omega^2}{\omega^2} = f_0,
\end{equation*}
then $\sigma = e^{f}\Omega^2 = e^{f + f_0} \omega^2$, and 
$$\int_M \omega^2 =\int_M \Omega^2.$$

By \cref{thm:1}, there exists a $\varphi \in C^{\infty}(M)_0$ solving the generalized Monge-Amp\`ere equation
\begin{equation*}
    e^{f + f_0} \omega^2 = \Tilde{\omega}^2 = (\omega + \mathcal{D}_J^+(\varphi))^2,
\end{equation*}
and there is a $C^{\infty}$-bound on $\varphi$ depending only on $\Omega, J$ and $f$. \par

Hence, the following corollary of Theorem \ref{thm:1} gives a positive answer to Donaldson's Conjecture:
\begin{cor}
    Suppose that $(M,J)$ is a closed almost complex $4$-manifold with $h_J^- = b^+ - 1$ , where $J$ is tamed by a symplectic form $\Omega = F + d_J^-(v + \bar{v})$ and $F$ is a positive $J-(1,1)$ form, $v \in \Omega_J^{0,1}(M)$. Then $\omega = \Omega - d(v + \bar{v}) = F - d_J^+(v+\bar{v})$ is an almost K\"ahler form on $M$. If
    \begin{equation*}
        \int_M e^{f} \Omega^2 = \int_M \Omega^2,
    \end{equation*}
    then there exists $\varphi \in C^{\infty}(M)_0$ such that $\Tilde{\omega} = \omega + \mathcal{D}_J^+(\varphi)$ solving the following equation
    \begin{equation*}
    \Tilde{\omega}^2 = e^{f+f_0} \omega^2 = e^{f} \Omega^2,
    \end{equation*}
    where 
    \begin{equation*}
        \int_M \Tilde{\omega}^2 = \int_M e^{f} \Omega^2
    \end{equation*}
    and there is a priori $C^{\infty}$-bound on $\varphi$ depending only $\Omega,J,f$, and $M$.
\end{cor}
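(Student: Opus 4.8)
The corollary is a direct consequence of \cref{thm:1}, and the plan is to carry out in detail the reduction already indicated above. First I would record that the hypotheses give an honest almost Kähler setup: by Tan-Wang-Zhou-Zhu \cite[Theorem~1.1]{TanWZZ22} and Wang-Wang-Zhu \cite[Theorem~4.3]{WangWZ23}, since $h_J^- = b^+ - 1$ and $J$ is tamed by $\Omega = F + d_J^-(v + \bar v)$ with $F$ a positive $J$-$(1,1)$ form and $v \in \Omega_J^{0,1}(M)$, the form $\omega := \Omega - d(v+\bar v) = F - d_J^+(v+\bar v)$ is an almost Kähler form on $M$; taking $g := \omega(\slot, J\slot)$, the quadruple $(M, \omega, J, g)$ is a closed almost Kähler surface to which \cref{thm:1} applies. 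Moreover $\omega$ and $\Omega$ differ by the exact form $d(v + \bar v)$, so $[\omega] = [\Omega]$ and in particular $\int_M \omega^2 = \int_M \Omega^2$.

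Next I would set up the source function. Define $f_0 := \log(\Omega^2/\omega^2) \in C^\infty(M)$, so that $e^f \Omega^2 = e^{f + f_0}\omega^2$. The taming normalization $\int_M e^f \Omega^2 = \int_M \Omega^2$ together with $\int_M \Omega^2 = \int_M \omega^2$ then yields $\int_M e^{f + f_0}\omega^2 = \int_M \omega^2$, which is exactly the compatibility condition required in \cref{thm:1} for the source function $f + f_0$. Applying \cref{thm:1} produces a unique $\varphi \in C^\infty(M, J)_0$ with $\omega + \mathcal{D}_J^+(\varphi) > 0$ and $(\omega + \mathcal{D}_J^+(\varphi))^2 = e^{f + f_0}\omega^2$, together with a $C^\infty$ a priori bound on $\varphi$ depending only on $\omega$, $J$, and $f + f_0$. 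Because $h_J^- = b^+ - 1$, the span of the $f_i$ is trivial, so $C^\infty(M, J)_0 = C^\infty(M)_0$ and hence $\varphi \in C^\infty(M)_0$. Setting $\tilde\omega := \omega + \mathcal{D}_J^+(\varphi)$ gives $\tilde\omega^2 = e^{f + f_0}\omega^2 = e^f \Omega^2$ and $\int_M \tilde\omega^2 = \int_M e^f \Omega^2$.

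It remains to track the dependence of the estimate. Since $\omega$ is built from $\Omega$, $J$ (via the given decomposition) and $M$, and $f_0 = \log(\Omega^2/\omega^2)$ is determined by the same data, the a priori $C^\infty$ bound on $\varphi$ supplied by \cref{thm:1} depends only on $\Omega$, $J$, $f$, and $M$, as claimed. The only substantive input is \cref{thm:1} itself, whose a priori estimates do all the real work; the remainder of the corollary is the cohomological bookkeeping that converts the taming form $\Omega$ into an almost Kähler representative $\omega$, transfers the prescribed volume form across this change of representative, and keeps the constants honest.
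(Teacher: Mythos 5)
Your proposal is correct and follows essentially the same route as the paper: pass from the taming form $\Omega$ to the almost K\"ahler representative $\omega = \Omega - d(v+\bar v)$ via \cite[Theorem~1.1]{TanWZZ22} and \cite[Theorem~4.3]{WangWZ23}, absorb the change of volume form into $f_0 = \log(\Omega^2/\omega^2)$, check the normalization $\int_M e^{f+f_0}\omega^2 = \int_M \omega^2$, and invoke \cref{thm:1}. Your explicit remark that $C^{\infty}(M,J)_0 = C^{\infty}(M)_0$ when $h_J^- = b^+ - 1$ is a detail the paper records earlier in the introduction rather than in the proof itself, but it is the same bookkeeping.
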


\begin{rem}
    It is natural to consider a generalized $\del_J \overline{\del}_J$ operator
    \begin{equation*}
        \mc{D}_J^+: C^{\infty}(M^{2n}) \to \Omega_J^+(M^{2n})
    \end{equation*}
    on an almost K\"ahler manifolds $(M^{2n},\omega,J)$ of complex dimension $n \geq 3$, and study the generalized Monge-Amp\`ere equation:
    \begin{equation}
        (\omega + \mc{D}_J^+(\varphi))^n = e^f \omega^n,
    \end{equation}
    where $\varphi,f \in C^{\infty}(M^{2n})$ satisfying
    \begin{equation}
        \int_{M^{2n}} \omega^n = \int_{M^{2n}} e^f \omega^n
    \end{equation}
\end{rem}

Section 2 introduces the notations for almost K\"ahler manifolds and defines the operator $\mathcal{D}_J^+$. Additionally, a local theory for the generalized Calabi-Yau equation is presented.
In Section 3, the uniqueness part of the main theorem is proved. Finally, Section 4 provides a proof for the existence part of the main theorem.

\section{Preliminaries}

Let $(M, J)$ be an almost complex manifold of dimension $2n$. A Riemannian metric $g$ on $M$ is said to be compatible with the almost complex structure $J$ if
\begin{equation*}
    g(JX, JY) = g(X, Y),
\end{equation*}
for all tangent vectors $X,Y \in TM$. In this case, $(M, J, g)$ is called an almost-Hermitian manifold. \par

The almost complex structure $J$ induces a decomposition of the complexified tangent space $T^{\C}M$:
\begin{equation*}
    T^{\C}M = T'M \oplus T''M,
\end{equation*}
where $T'M$ and $T''M$ are the eigenspaces of $J$ corresponding to the eigenvalues $\sqrt{-1}$ and $-\sqrt{-1}$, respectively. \par

A local unitary frame ${e_1, \ldots, e_n}$ can be chosen for $T'M$, with the dual coframe denoted by ${\theta^1, \ldots, \theta^n}$. Using this coframe, the metric $g$ can be expressed as
\begin{equation*}
    g = \theta^i \otimes \overline{\theta^i} + \overline{\theta^i} \otimes \theta^i.
\end{equation*}
The fundamental form $\omega$ is defined by $\omega(\cdot, \cdot) = g(J\cdot,\cdot)$ and can be written as
\begin{equation*}
    \omega = \sqrt{-1}\theta^i \wedge \overline{\theta^i}.
\end{equation*}
The manifold $(M, \omega, J, g)$ is called almost K\"ahler if $d\omega = 0$. \par

The almost complex structure $J$ also acts as an involution on the bundle of real two-forms via
\begin{equation*}
    \mc{J}:\alpha(\cdot,\cdot) \mapsto \alpha(J\cdot,J\cdot).
\end{equation*}
This action induces a splitting of $\Lambda^2$ into $J$-invariant and $J$-anti-invariant two-forms:
\begin{equation*}
    \Lambda^2 = \Lambda_J^+ \oplus \Lambda_J^-.
\end{equation*}
Let $\Omega_J^+$ and $\Omega_J^-$ denote the spaces of the $J$-invariant and $J$-anti-invariant forms, respectively. We use $\mathcal{Z}$ to denote the space of closed 2-forms and $\mathcal{Z}_J^{\pm} := \mathcal{Z} \cap \Omega_J^{\pm}$ for the corresponding projections. \par

The following operators are defined as:
\begin{align*}
    d_J^+ := P_J^+d: \Omega_{\R}^1 \to \Omega_J^+, \\
    d_J^- := P_J^-d: \Omega_{\R}^1 \to \Omega_J^-,
\end{align*}
where $P_J^{\pm} = \frac{1}{2}(1\pm J)$ are algebraic projections on $\Omega_{\R}^2(M)$. \\

\begin{prop}
Let $(M,J,F,g)$ be a closed Hermitian 4-manifold, then
\begin{equation*}
    d_J^+: \Lambda_{\R}^1 \otimes L_1^2(M) \to \Lambda_J^{1,1} \otimes L^2(M)
\end{equation*}
has closed range.
\end{prop}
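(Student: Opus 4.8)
The plan is to realize $d_J^+$ as one block of an overdetermined elliptic first order operator and then to transfer the closed range property through the Hodge decomposition of $1$-forms. The point is that $d_J^+$ by itself is \emph{not} elliptic: its principal symbol at a covector $\xi$ is $u\mapsto P_J^+(\xi\wedge u)$, which annihilates $\xi$, so one is forced to couple it with the codifferential $d^*$ of $(M,g)$. Concretely, I would consider
\begin{equation} \label{eq:prop-L}
    L \;:=\; d^*\oplus d_J^+\;\colon\; \Lambda^1_{\R}\otimes L_1^2(M)\;\longrightarrow\; \bigl(\Lambda^0\otimes L^2(M)\bigr)\oplus\bigl(\Lambda_J^{1,1}\otimes L^2(M)\bigr)
\end{equation}
and first check that $L$ has injective principal symbol off the zero section. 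Granting this, $L$ is overdetermined elliptic on the closed manifold $M$; equivalently $\Box:=L^*L=d\,d^*+(d_J^+)^*d_J^+$ on $1$-forms is elliptic, formally self-adjoint and non-negative. Standard elliptic theory then yields G{\aa}rding's inequality
\begin{equation} \label{eq:prop-garding}
    \norm{u}_{L_1^2}\;\leq\; C\bigl(\norm{d^*u}_{L^2}+\norm{d_J^+u}_{L^2}+\norm{u}_{L^2}\bigr),
\end{equation}
a finite dimensional smooth kernel $\mc{H}:=\ker\Box=\ker d^*\cap\ker d_J^+$, and the Hodge-type decomposition in which every $u\in\Lambda^1_{\R}\otimes L_1^2$ splits as $u=h+df+w$ with $h\in\mc{H}$, $f\in L_2^2(M)$ and $w\in\ker d^*$, all of class $L_1^2$.

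The one genuinely new ingredient is the symbol computation, which I would carry out pointwise. Fix $x\in M$ and $0\neq\xi\in T_x^*M$; the claim is that any $u\in\Lambda^1_{\R}$ with $\iota_{\xi^\sharp}u=0$ and $P_J^+(\xi\wedge u)=0$ vanishes. Since $\xi$ is real and nonzero, its $(1,0)$-part $\xi^{1,0}$ is nonzero, so there is a $g$-unitary coframe $\theta^1,\theta^2$ at $x$ with $\theta^1$ a positive multiple of $\xi^{1,0}$; after rescaling $\xi$ we may assume $\xi=\theta^1+\overline{\theta^1}$. Writing $u=p\,\theta^1+q\,\theta^2+\bar p\,\overline{\theta^1}+\bar q\,\overline{\theta^2}$ with $p,q\in\C$, a short expansion of $\xi\wedge u$ shows that its $(1,1)$-component equals
\begin{equation*}
    (\bar p-p)\,\theta^1\wedge\overline{\theta^1}\;+\;\bar q\,\theta^1\wedge\overline{\theta^2}\;-\;q\,\theta^2\wedge\overline{\theta^1}.
\end{equation*}
As $P_J^+$ is precisely the projection onto the $(1,1)$-part, the condition $P_J^+(\xi\wedge u)=0$ forces $p\in\R$ and $q=0$, i.e. $u=p\,\xi$; then $0=\iota_{\xi^\sharp}u=p\,\abs{\xi}^2$ gives $p=0$. (Only the $J$-compatibility of $g$ is used here; neither the integrability of $J$ nor $dF=0$ enters.)

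Finally I would transfer closedness from $L$ to $d_J^+$ itself. Restricting \eqref{eq:prop-garding} to the closed subspace $\ker d^*\subset\Lambda^1_{\R}\otimes L_1^2$ gives $\norm{u}_{L_1^2}\leq C(\norm{d_J^+u}_{L^2}+\norm{u}_{L^2})$ there; since $L_1^2\hookrightarrow L^2$ is compact (Rellich), $d_J^+|_{\ker d^*}$ is semi-Fredholm and in particular has closed range $R\subseteq\Lambda_J^{1,1}\otimes L^2$. For arbitrary $u\in\Lambda^1_{\R}\otimes L_1^2$, write $u=h+df+w$ as above; since $d(df)=0$ and $h\in\ker d_J^+$, one gets $d_J^+u=d_J^+w$ with $w\in\ker d^*$, hence $d_J^+u\in R$. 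Therefore $\im\bigl(d_J^+\colon\Lambda^1_{\R}\otimes L_1^2\to\Lambda_J^{1,1}\otimes L^2\bigr)=R$ is closed. The main obstacle is the first step: recognizing that $d_J^+$ must be enlarged to the elliptic operator \eqref{eq:prop-L} and verifying the symbol injectivity above; everything after that is routine elliptic theory and functional analysis.
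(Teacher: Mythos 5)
Your proof is correct. Note that the paper itself states this proposition without proof (it is imported from the Tan--Wang--Zhou--Zhu reference), so there is no in-paper argument to compare against; your route --- augmenting $d_J^+$ by $d^*$ to get the overdetermined elliptic operator $L=d^*\oplus d_J^+$, verifying symbol injectivity pointwise, invoking G{\aa}rding plus the Rellich compactness to make $d_J^+|_{\ker d^*}$ semi-Fredholm, and then showing via the Hodge decomposition $u=h+df+w$ that the full image of $d_J^+$ coincides with the image of the restriction --- is the standard way to establish closed range for such non-elliptic first-order operators, and each step checks out. Two small confirmations: your identification of $P_J^+$ with the $(1,1)$-projection on real $2$-forms is exactly right (the $\mc{J}$-eigenvalue on $\theta^i\wedge\overline{\theta^j}$ is $+1$ and on $\theta^i\wedge\theta^j$ is $-1$), and the claim that the third summand $w$ of the decomposition lies in $\ker d^*$ holds because $(d_J^+)^*=d^*\circ P_J^+$ lands in the image of $d^*$ and $d^*\circ d^*=0$; as you observe, only the $J$-compatibility of $g$ is used, so the statement indeed holds at the stated (almost) Hermitian level of generality.
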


Li and Zhang \cite{LiZhang09} introduced the $J$-invariant and $J$-anti-invariant cohomology subgroups $H_J^{\pm}$ of $H^2(M;\R)$ as follows:
\begin{defn}
The $J$-invariant, respectively, $J$-anti-invariant cohomology subgroups $H_J^{\pm}$
are defined by
$$
    H_J^{\pm} := \Set{\mathfrak{a} \in H^2(M,\R) \given \exists \alpha \in \mathcal{Z}_J^{\pm} \text{ such that } [\alpha] = \mathfrak{a}}.
$$
An almost complex structure $J$ is said to be $C^{\infty}$-pure if $H_J^+ \cap H_J^- = \{ 0 \}$, respectively $C^{\infty}$-full if 
$H_J^+ + H_J^- = H^2(M;\R)$.
\end{defn}

In the case of (real) dimension 4, this gives a decomposition of $H^2(M)$:
\begin{prop}[\cite{DLZ10}]
If $M$ is a closed almost complex 4-manifold, then any almost complex structure $J$ on $M$ is $C^{\infty}$-pure and full, i.e.,
$$
    H^2(M;\R) = H_J^+ \oplus H_J^-.
$$
Let $h^+_J$ and $h_J^-$ denote the dimensions of $H^+_J$ and $H_J^-$, respectively. Then $b^2 = h^+_J + h_J^-$ , where $b^2$ is the second Betti number.
\end{prop}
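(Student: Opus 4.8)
The plan is to fix a $J$-compatible Riemannian metric $g$ on $M$ (these exist on any almost complex manifold) with fundamental form $\omega$, and to exploit the coincidences special to real dimension $4$. There the Hodge star $\hodge\colon\Lambda^2\to\Lambda^2$ is an involution, splitting $\Lambda^2=\Lambda_g^+\oplus\Lambda_g^-$ into its $\pm 1$-eigenbundles, the self-dual and anti-self-dual $2$-forms (each of rank $3$). A pointwise computation in a unitary coframe shows $\Lambda_J^-\subseteq\Lambda_g^+$ and $\Lambda_g^-\subseteq\Lambda_J^+$, with $\Lambda_g^+=\R\omega\oplus\Lambda_J^-$ and $\Lambda_J^+=\R\omega\oplus\Lambda_g^-$; moreover $\mc{J}$ is $g$-symmetric on $\Lambda^2$, so $\Lambda_J^+\perp\Lambda_J^-$ pointwise. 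Two consequences will be used repeatedly. First, a closed $J$-anti-invariant $2$-form $\beta$ is self-dual, so $\hodge\beta=\beta$ is closed, that is $d^{\hodge}\beta=0$, and $\beta$ is harmonic; hence $\mc{Z}_J^-$ consists of harmonic forms, $\beta\mapsto[\beta]$ is injective on it (harmonic representatives being unique) with image $H_J^-$, and $[\beta]^2=\int_M\beta\wedge\hodge\beta=\lnorm{\beta}^2$, so $h_J^-$ is finite and $H_J^-$ is positive-definite for the cup product. Second, every anti-self-dual harmonic $2$-form is $J$-invariant and closed, so it represents a class in $H_J^+$.

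For the $C^{\infty}$-pure part I would argue directly. If $\mf{a}\in H_J^+\cap H_J^-$, pick closed $\alpha\in\Omega_J^+$ and $\beta\in\Omega_J^-$ with $[\alpha]=[\beta]=\mf{a}$. Since $\beta$ is closed and self-dual, Stokes' theorem and the pointwise orthogonality $\Lambda_J^+\perp\Lambda_J^-$ give
\begin{equation*}
    \lnorm{\beta}^2=\int_M\beta\wedge\hodge\beta=\int_M\beta\wedge\beta=\int_M\alpha\wedge\beta=\int_M\langle\alpha,\beta\rangle_g\,\vol_g=0 ,
\end{equation*}
so $\beta\equiv 0$ and $\mf{a}=0$; thus $H_J^+\cap H_J^-=\{0\}$.

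The $C^{\infty}$-full part is where the real work lies. Given $\mf{a}\in H^2(M;\R)$, take its $g$-harmonic representative $\alpha$ and split it into self-dual and anti-self-dual parts, $\alpha=\alpha^++\alpha^-$; in real dimension $4$ both summands are again harmonic (from $d^{\hodge}\alpha=0$ one gets $d\hodge\alpha=0$, so $\alpha^{\pm}=\tfrac12(\alpha\pm\hodge\alpha)$ are closed, hence harmonic being self- or anti-self-dual), and by the second consequence above $[\alpha^-]\in H_J^+$. It remains to show $[\alpha^+]\in H_J^++H_J^-$. Set $\gamma:=P_J^-\alpha^+\in\Omega_J^-$. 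The key input is an $L^2$-orthogonal Hodge-type decomposition
\begin{equation*}
    \Omega_J^-(M)=\mc{Z}_J^-\oplus\im(d_J^-).
\end{equation*}
To obtain it, note that on $\Omega_J^-$ the formal adjoint of $d_J^-$ sends $\beta$ to $-\hodge d\beta$ (these $\beta$ being self-dual $2$-forms), with kernel exactly $\mc{Z}_J^-$; so once $d_J^-\colon\Omega_{\R}^1\to\Omega_J^-$ is known to have closed range, the standard Fredholm and elliptic-regularity package produces the splitting. Decomposing $\gamma=\beta+d_J^-\eta$ with $\beta\in\mc{Z}_J^-$ and $\eta\in\Omega_{\R}^1$, the $2$-form $\alpha^+-\beta-d\eta$ is closed and satisfies $P_J^-(\alpha^+-\beta-d\eta)=\gamma-\beta-d_J^-\eta=0$, so it lies in $\mc{Z}_J^+$; hence $[\alpha^+]=[\alpha^+-\beta-d\eta]+[\beta]\in H_J^++H_J^-$, and $\mf{a}\in H_J^++H_J^-$.

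Putting the two parts together gives $H^2(M;\R)=H_J^+\oplus H_J^-$, and then $b^2=h_J^++h_J^-$ by a dimension count. The one non-formal ingredient --- hence the main obstacle --- is the closed-range property of $d_J^-\colon\Omega_{\R}^1\to\Omega_J^-$ on a closed $4$-manifold (equivalently, the above Hodge decomposition of $\Omega_J^-$), the companion of the Proposition stated above for $d_J^+$; the plan follows the line of argument of \cite{DLZ10}, where this analytic point is established.
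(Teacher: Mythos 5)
The paper does not actually prove this statement; it is quoted from Dr\u{a}ghici--Li--Zhang \cite{DLZ10} as background, so there is no in-paper argument to compare against. Your reconstruction is correct as it stands. The pureness half (pointwise orthogonality of $\Lambda_J^+$ and $\Lambda_J^-$ plus self-duality of closed $J$-anti-invariant forms, giving $\lnorm{\beta}^2=\int_M\alpha\wedge\beta=0$) is exactly the standard dimension-four argument. For fullness, the one analytic input you flag --- the $L^2$-orthogonal splitting $\Omega_J^-=\mc{Z}_J^-\oplus\im(d_J^-)$, equivalently the closed range of $d_J^-$ --- is not something you need to re-derive: it is precisely Lejmi's result recorded in the paper as Proposition~\ref{prop:3}, namely $\Omega_J^-=\ker P\oplus d_J^-\Omega_\R^1$ with $\ker P=\mc{Z}_J^-$ the self-dual harmonic $J$-anti-invariant forms. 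Granting that, your reduction $[\alpha^+]=[\alpha^+-\beta-d\eta]+[\beta]$ with $P_J^-(\alpha^+-\beta-d\eta)=0$ closes the argument; note in passing that the preliminary harmonic splitting $\alpha=\alpha^++\alpha^-$ is not needed, since the same projection trick applied directly to any closed representative $\alpha$ (set $\gamma=P_J^-\alpha=\beta+d_J^-\eta$) already yields $[\alpha]\in H_J^++H_J^-$. The original proof in \cite{DLZ10} is organized somewhat differently (it works with the harmonic self-dual forms pointwise orthogonal to $\omega$ and a dimension count), but your route through the elliptic decomposition of $\Omega_J^-$ is the one most consonant with the machinery this paper sets up, and it buys the identification $\mc{H}_g^{+,\omega^\perp}\cong H_J^-$ used later essentially for free.
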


It is well-known that the self-dual and anti-self-dual decomposition of 2-forms is induced by the Hodge operator $\hodge_g$ of a Riemannian metric $g$ on a 4-dimensional manifold $M$:
\begin{equation*}
    \Lambda^2 = \Lambda_g^+ \oplus \Lambda_g^-.
\end{equation*}
Let $\Omega^\pm_g$ denote the spaces of smooth sections of $\Lambda^{\pm}_g$. The Hodge-de Rham Laplacian,
$$
    \Delta_g=dd^*+d^*d:\Omega^2(M)\rightarrow\Omega^2(M),
$$
where $d^*=-\hodge_g d \hodge_g$ is the codifferential operator with respect to the metric $g$, commutes with Hodge star operator $\hodge_g$. Consequently, the decomposition also holds for the space $\mathcal{H}_g$ of harmonic 2-forms. By Hodge theory, this induces a cohomology decomposition determined by the metric $g$:
$$
  \mathcal{H}_g=\mathcal{H}_g^+\oplus\mathcal{H}_g^-.
$$

We can further define the operators
\begin{equation*}
  d^\pm_g := P_g^{\pm}d:\Omega^1_\R \to \Omega^\pm_g,
\end{equation*}
where $d$ is the exterior derivative $d : \Omega^1_\R \to \Omega^2_\R$, and $P^\pm_g := \frac{1}{2}(1 \pm \hodge_g)$ are the algebraic projections. The following Hodge decompositions hold:
\begin{equation*}
   \Omega^+_g = \mathcal{H}^+_g \oplus d^+_g(\Omega_{\R}^1), \quad \Omega^-_g=\mathcal{H}^-_g \oplus d^-_g(\Omega_{\R}^1).
\end{equation*}
These decompositions are related by \cite{TanWZZ22}
\begin{align*}
    \Lambda_J^+ &= \R \omega \oplus \Lambda_g^-,\\
    \Lambda_g^+ &= \R \omega \oplus \Lambda_J^-.
\end{align*}
In particular, any $J$-anti-invariant 2-form in 4 dimensions is self-dual. Therefore, any closed $J$-anti-invariant 2-form is harmonic and self-dual. This identifies the space $H_J^-$ with $\mathcal{Z}_J^-$ and, further, with the set $\mathcal{H}_g^{+,\omega^\perp}$ of harmonic self-dual forms that are pointwise orthogonal to $\omega$. \\

Lejmi \cite{Lejmi10E,Lejmi10S} first recognized $\mathcal{Z}^-_J$ as the kernel of an elliptic operator on $\Omega^-_J$:
\begin{prop}[\cite{Lejmi10E}]
\label{prop:3}
Let $(M,\omega,J,g)$ be a closed almost Hermitian 4-manifold. Define the following operator
\begin{align*}
    P: \Omega^-_J &\to \Omega^-_J\\
    \psi&\mapsto P^-_J(dd^*\psi).
\end{align*}
Then $P$ is a self-adjoint, strongly elliptic linear operator with a kernel consisting of $g$-self-dual-harmonic, $J$-anti-invariant 2-forms. Hence,
\begin{equation*}
    \Omega^-_J=\ker P \oplus d^-_J\Omega^1_\R.
\end{equation*}
\end{prop}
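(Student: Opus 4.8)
The plan is to verify the three asserted properties of $P$ in turn---formal self-adjointness together with non-negativity, strong ellipticity, and the description of $\ker P$---and then to read off the orthogonal decomposition from standard elliptic theory on the closed manifold $M$. I would begin with self-adjointness, which is immediate from integration by parts: for $\psi,\phi\in\Omega^-_J$, since $P^-_J$ is the $L^2$-orthogonal projection onto $\Omega^-_J$ and $\phi$ already lies in $\Omega^-_J$, we get $\langle P\psi,\phi\rangle=\langle dd^*\psi,\phi\rangle=\langle d^*\psi,d^*\phi\rangle$, which is symmetric in $\psi$ and $\phi$; taking $\phi=\psi$ gives in addition $\langle P\psi,\psi\rangle=\norm{d^*\psi}^2\ge 0$, so $P$ is also non-negative.

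Next comes ellipticity, which is the heart of the matter. I would compute the principal symbol: at $x\in M$ and $0\ne\xi\in T^*_xM$ it is, up to a positive constant, the endomorphism $v\mapsto P^-_J(\xi\wedge\iota_{\xi^\sharp}v)$ of $\Lambda^-_{J,x}$, so that---using $v\in\Lambda^-_J$ and the adjointness of $\xi\wedge(\pldr)$ and $\iota_{\xi^\sharp}$---one has $\langle\sigma_\xi(P)v,v\rangle=\langle\xi\wedge\iota_{\xi^\sharp}v,v\rangle=\abs{\iota_{\xi^\sharp}v}^2$, consistent with the non-negativity just established. Strong ellipticity thus amounts to the pointwise statement that $\iota_X$ has no kernel on $\Lambda^-_{J,x}$ for $X\ne0$, and I would in fact prove the sharper identity $\abs{\iota_X v}^2=\tfrac12\abs{X}^2\abs{v}^2$ for all $v\in\Lambda^-_{J,x}$ and $X\in T_xM$. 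This is a short linear-algebra computation: choosing an orthonormal coframe $e^1,\dots,e^4$ adapted to $(J,g)$ with $\omega=e^{12}+e^{34}$, the fibre $\Lambda^-_{J,x}$---the self-dual $2$-forms pointwise orthogonal to $\omega$, as recalled in Section~2---is spanned by $e^{13}-e^{24}$ and $e^{14}+e^{23}$, and for $v=a(e^{13}-e^{24})+b(e^{14}+e^{23})$ and $X=\sum_i x_ie_i$ one finds $\abs{\iota_Xv}^2=(a^2+b^2)\abs{X}^2=\tfrac12\abs{v}^2\abs{X}^2$. Hence $\langle\sigma_\xi(P)v,v\rangle=\tfrac12\abs{\xi}^2\abs{v}^2$ is positive definite for $\xi\ne0$, so $P$ is strongly elliptic.

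With $P$ self-adjoint and strongly elliptic on the closed manifold $M$, standard elliptic theory gives that $\ker P$ is finite-dimensional and that $\Omega^-_J=\ker P\oplus\im P$ is an $L^2$-orthogonal decomposition with $\im P=(\ker P)^\perp$. From $\langle P\psi,\psi\rangle=\norm{d^*\psi}^2$ we read off $P\psi=0\iff d^*\psi=0$; and since every form in $\Omega^-_J$ is $g$-self-dual, $d^*\psi=-\hodge_g d\hodge_g\psi=-\hodge_g d\psi$, so $d^*\psi=0\iff d\psi=0$. Hence $\ker P=\mc{Z}^-_J$, the space of closed, self-dual---therefore harmonic---$J$-anti-invariant $2$-forms, as claimed. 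For the image, $P\psi=P^-_J\big(d(d^*\psi)\big)=d^-_J(d^*\psi)$ shows $\im P\subseteq d^-_J\Omega^1_\R$, while for $\eta\in\Omega^1_\R$ and $\psi\in\ker P$ the computation $\langle d^-_J\eta,\psi\rangle=\langle d\eta,\psi\rangle=\langle\eta,d^*\psi\rangle=0$ shows $d^-_J\Omega^1_\R\subseteq(\ker P)^\perp=\im P$; combining these, $\im P=d^-_J\Omega^1_\R$ and $\Omega^-_J=\ker P\oplus d^-_J\Omega^1_\R$.

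The step I expect to be the main obstacle is the symbol computation---equivalently, the linear-algebra fact that no nonzero self-dual $2$-form pointwise orthogonal to $\omega$ is annihilated by contraction with a nonzero vector. This is exactly what makes $P$ genuinely elliptic, whereas $dd^*$ alone is only degenerate-elliptic on the full space $\Omega^2$; once it is in hand, the remaining assertions follow by integration by parts together with the already-recorded self-duality of $J$-anti-invariant $2$-forms in dimension $4$.
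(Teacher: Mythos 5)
Your proof is correct, and it is essentially the standard argument of Lejmi that the paper merely cites without reproducing: orthogonality of $P^-_J$ gives formal self-adjointness and $\langle P\psi,\psi\rangle=\norm{d^*\psi}^2$, the pointwise identity $\abs{\iota_X v}^2=\tfrac12\abs{X}^2\abs{v}^2$ on $\Lambda^-_J$ gives the positive-definite symbol, and self-duality of $J$-anti-invariant $2$-forms turns $d^*\psi=0$ into $d\psi=0$, after which the Fredholm alternative yields the decomposition. Your symbol computation in the adapted coframe checks out, so there is nothing to correct.
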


By using the operator $P$ defined in \cref{prop:3}, Tan-Wang-Zhou-Zhu \cite{TanWZZ22} introduced the $\mc{D}^+_J$ operator:
\begin{defn}
 Let $(M,\omega,J,g)$ be a closed almost Hermitian 4-manifold. Denote by
$$
L_2^2(M)_0:=\{f\in L_2^2(M)|\int_Mfd\mu_g=0\}.
$$
Define
\begin{align*}
    \mathcal{W}_J: L^2_2(M)_0 &\longrightarrow \Lambda^1_\mathbb{R}\otimes L^2_1(M), \\
                    f &\longmapsto Jdf+d^*(\eta^1_f+\overline{\eta}^1_f),
\end{align*}
where $\eta^1_f \in \Lambda^{0,2}_J \otimes L^2_2(M)$ satisfies
$$
    d^-_J\mathcal{W}_J(f)=0.
$$
Define
\begin{align*}
    \mathcal{D}^+_J: L^2_2(M)_0 &\longrightarrow \Lambda^{1,1}_J\otimes L^2(M), \\
    f &\longmapsto d\mathcal{W}_J(f).
\end{align*}
In the case of $(M,\omega,J,g)$ being an almost K\"ahler surface, such function $f$ is called the almost K\"ahler potential with respect to the almost K\"ahler metric $g$.
\end{defn} 


The operatpr $\mc{D}_J^+$ has closed range as well (cf. \cite{TanWZZ22}):
\begin{prop}
Suppose $(M,\omega,J,g)$ is a closed almost K\"ahler surface. Then $\mc{D}_J^+: L_2^2(M)_0 \to \Lambda_J^+ \otimes L^2(M)$ has closed range.
\end{prop}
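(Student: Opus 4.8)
The plan is to establish the stronger fact that $\mathcal{D}^{+}_{J}$, regarded as a second-order differential operator from functions to $J$-invariant $2$-forms, has \emph{injective principal symbol}. Once this is known, the Laplace-type operator $(\mathcal{D}^{+}_{J})^{*}\mathcal{D}^{+}_{J}$ is a (determined) elliptic operator of order $4$ on $C^{\infty}(M)_{0}$, and the closed range of $\mathcal{D}^{+}_{J}$ follows from the associated semi-Fredholm estimate together with the compactness of the embedding $L^{2}_{2}(M)\hookrightarrow L^{2}(M)$.

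First I would unwind the operator. For $f\in L^{2}_{2}(M)_{0}$ set $\psi_{f}:=\eta^{1}_{f}+\overline{\eta}^{1}_{f}\in\Omega^{-}_{J}$. Since $P^{-}_{J}d(Jdf)=d^{-}_{J}(Jdf)$ and $P^{-}_{J}dd^{*}\psi_{f}=P\psi_{f}$, the defining relation $d^{-}_{J}\mathcal{W}_{J}(f)=0$ amounts to the equation $P\psi_{f}=-\,d^{-}_{J}(Jdf)$ with $\psi_{f}\perp\ker P$, where $P$ is the strongly elliptic operator of \cref{prop:3}. The right-hand side is automatically $L^{2}$-orthogonal to $\ker P=\mathcal{Z}^{-}_{J}$, because any $\phi\in\mathcal{Z}^{-}_{J}$ is closed and self-dual, whence $\langle d^{-}_{J}(Jdf),\phi\rangle=\langle Jdf,d^{*}\phi\rangle=0$. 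Consequently $f\mapsto\psi_{f}$ equals $-Q\circ d^{-}_{J}\circ(J\,d)$ with $Q$ the generalized inverse of $P$ on $(\ker P)^{\perp}$ (a pseudodifferential operator of order $-2$), and
\[
\mathcal{D}^{+}_{J}f=d\mathcal{W}_{J}(f)=d(Jdf)+dd^{*}\psi_{f}.
\]

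Next I would compute principal symbols. Fix $\xi\neq 0$. The order-$2$ symbol of $f\mapsto d(Jdf)$ is $f\mapsto c\,(\xi\wedge J\xi)\,f$, and $\xi\wedge J\xi$ is nonzero and $\mathcal{J}$-invariant — directly, writing $\xi^{J}(\cdot):=\xi(J\cdot)$ one gets $\mathcal{J}(\xi\wedge\xi^{J})=\xi^{J}\wedge\xi^{JJ}=\xi\wedge\xi^{J}$. Hence the order-$2$ symbol of $f\mapsto d^{-}_{J}(Jdf)$ is $f\mapsto c'\,P^{-}_{J}(\xi\wedge J\xi)\,f=0$, so that operator is in fact of order $\le 1$; therefore $f\mapsto\psi_{f}$ is of order $\le -1$ and the correction $dd^{*}\psi_{f}$ is of order $\le 1$ in $f$. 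It follows that $\sigma_{2}(\mathcal{D}^{+}_{J})(\xi)\colon f\mapsto c\,(\xi\wedge J\xi)\,f$ is injective for every $\xi\neq 0$. Thus $(\mathcal{D}^{+}_{J})^{*}\mathcal{D}^{+}_{J}$ is elliptic of order $4$, and for $f\in L^{2}_{2}(M)_{0}$,
\[
\|f\|_{L^{2}_{2}(M)}\le C\bigl(\|(\mathcal{D}^{+}_{J})^{*}\mathcal{D}^{+}_{J}f\|_{L^{2}_{-2}(M)}+\|f\|_{L^{2}(M)}\bigr)\le C\bigl(\|\mathcal{D}^{+}_{J}f\|_{L^{2}(M)}+\|f\|_{L^{2}(M)}\bigr).
\]
Since $L^{2}_{2}(M)\hookrightarrow L^{2}(M)$ is compact, this semi-Fredholm estimate forces $\ker\mathcal{D}^{+}_{J}$ to be finite-dimensional and $\operatorname{Im}\mathcal{D}^{+}_{J}$ to be closed, by the usual argument (rescale a would-be non-Cauchy minimizing sequence modulo the kernel and extract a convergent subsequence).

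The only non-routine point is the symbol computation, and within it the observation that the $\eta^{1}_{f}$-correction $dd^{*}\psi_{f}$ is of genuinely lower order. This rests on the algebraic fact that $\xi\wedge J\xi$ is $\mathcal{J}$-invariant (so that $f\mapsto d^{-}_{J}(Jdf)$ drops a derivative) and on the standard but slightly delicate fact that inverting $P$ modulo the finite-rank projection onto $\mathcal{Z}^{-}_{J}$ is a pseudodifferential operation of order $-2$. An alternative route that bypasses symbols is to use $\mathcal{D}^{+}_{J}=d^{+}_{J}\circ\mathcal{W}_{J}$ (valid since $d^{-}_{J}\mathcal{W}_{J}(f)=0$) together with the closed range of $d^{+}_{J}$ and the splitting $\Omega^{-}_{J}=\ker P\oplus d^{-}_{J}\Omega^{1}_{\R}$ of \cref{prop:3}; then the difficulty becomes proving that $\operatorname{Im}\mathcal{W}_{J}+\ker d$ is closed inside $\ker d^{-}_{J}$.
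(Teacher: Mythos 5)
The paper itself gives no proof of this proposition --- it is quoted from Tan--Wang--Zhou--Zhu \cite{TanWZZ22} --- so your argument has to stand on its own, and it does. The key steps are all correct: rewriting the defining condition $d_J^-\mathcal{W}_J(f)=0$ as $P\psi_f=-d_J^-(Jdf)$ with $\psi_f\perp\ker P$ for Lejmi's operator $P$; the solvability check (elements of $\ker P$ are closed and self-dual, hence coclosed, and $\Lambda_J^+\perp\Lambda_J^-$ pointwise since $\mathcal{J}$ is a $g$-isometry, so the pairing with $d_J^-(Jdf)$ vanishes); and the observation that $\xi\wedge\xi^J$ is $\mathcal{J}$-invariant, which kills the order-$2$ symbol of $d_J^-\circ(Jd)$ and makes the correction $dd^*\psi_f$ subprincipal. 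Injectivity of $\sigma_2(\mathcal{D}_J^+)(\xi)\colon f\mapsto c\,(\xi\wedge\xi^J)f$ then yields the semi-Fredholm estimate, and Peetre's lemma gives closed range; restricting to the closed subspace $L^2_2(M)_0$ changes nothing. One simplification worth noting: you do not need the full pseudodifferential calculus for the generalized inverse $Q$. The elementary mapping property $\norm{Qh}_{L^2_{s+2}}\le C\norm{h}_{L^2_s}$ for the Green's operator of the self-adjoint elliptic operator $P$ already gives $\norm{dd^*\psi_f}_{L^2}\le C\norm{f}_{L^2_1}$, and combining this with the overdetermined-elliptic estimate for $dJd$ (whose symbol $\xi\wedge\xi^J$ is injective) and interpolation of $\norm{f}_{L^2_1}$ between $L^2_2$ and $L^2$ yields
\begin{equation*}
\norm{f}_{L^2_2}\le C\bigl(\norm{\mathcal{D}_J^+f}_{L^2}+\norm{f}_{L^2}\bigr)
\end{equation*}
directly, so the one ``slightly delicate'' point you flag is in fact entirely routine.
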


The remaining of this section is devoted to a local theory of a generalized Calabi-Yau equation suggested by Gromov \cite{Delanoe96,WangZ10}. \par

Observe that the generalized Monge-Amp\`ere equation \labelcref{eq:1.2} is equivalent to the following Calabi-Yau equation:
\begin{equation}  \label{eq:3.3}
    (\omega + du)^2 = e^f \omega^2 
\end{equation}
for $u \in \Omega_\R^1(M)$, $d_J^- u = 0$, by letting $u = \mc{W}_J(\varphi)$.

\begin{defn}
    Suppose $(M,\omega,J,g)$ is a closed almost K\"ahler surface. The sets $A,B,A_+$, and $B_+$ are defined as follows:
    \begin{alignat*}{2}
        &A & &:=\ \Set{u \in \Omega_\R^1(M) \given d_J^- u = 0, \quad d^* u = 0}, \\
        &B & &:=\ \Set{\varphi \in C^{\infty}(M) \given \int_M \varphi\omega^2 = \int_M \omega^2}, \\
        &A_+ & &:=\ \Set{u \in A \given \omega + du > 0}, \\[0.5em]
        &B_+ & &:=\ \Set{f \in B \given f > 0 \text{ on } M}.
    \end{alignat*}
\end{defn}

Let $\omega(\phi) = \omega + d\phi$. Since 
$$
\int_M(\omega(\phi))^2=\int_M\omega^2
$$
with $\phi\in A$, the operator $\mc{F}$, defined by
\begin{equation*}
    \mc{F}(\phi)\omega^2 = (\omega(\phi))^2
\end{equation*}
sends $A$ into $B$. \par

By a direct calculation, for any $u \in A_+$, the tangent space $T_u A_+$ at $u$ is $A$. Given any $\phi \in A$, we define
\begin{equation}
    L(u)(\phi) = \diff*{\mc{F}(u+t\phi)}{t}[t=0].
\end{equation}
According to \cite{Delanoe96,WangZ10}, $L(u)$ is a linear elliptic system on $A$. Hence, we get the following lemma (cf. \cite[Proposition~1]{Delanoe96} ,\cite[Lemma~2.5]{WangZ10}):
\begin{lem}
    Suppose that $(M,\omega,J,g)$ is a closed almost K\"ahler surface. Then the restricted operator
    \begin{equation*}
        \mc{F}|_{A_+}: A_+ \to B_+
    \end{equation*}
    is of elliptic type on $A_+$.
\end{lem}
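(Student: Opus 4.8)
The plan is to check two things, which together constitute ``of elliptic type'' in the sense of Delan\"oe: that $\mc{F}$ actually carries $A_+$ into $B_+$, and that at each $u\in A_+$ the linearization $L(u)$, coupled with the linear equations cutting out $A$, is an elliptic system. Throughout, the argument parallels \cite[Proposition~1]{Delanoe96} and \cite[Lemma~2.5]{WangZ10}, with the closedness $d\omega=0$ playing the role taken there by the K\"ahler condition.

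First, the mapping property. Fix $u\in A_+$ and set $\omega_u:=\omega(u)=\omega+du$. Since $d_J^-u=0$ we have $du=d_J^+u\in\Omega_J^+$, so $\omega_u$ is a $J$-invariant $2$-form; it is positive by hypothesis, hence $\mc{F}(u)=\omega_u^2/\omega^2>0$ pointwise on $M$. Because $d\omega=0$ and $M$ is closed, Stokes' theorem gives
\begin{equation*}
\int_M\omega_u^2=\int_M\omega^2+2\int_M\omega\wedge du+\int_M du\wedge du=\int_M\omega^2 ,
\end{equation*}
so $\mc{F}(u)\in B$ and therefore $\mc{F}(u)\in B_+$. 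Differentiating the identity $\mc{F}(u+t\phi)\,\omega^2=(\omega_u+t\,d\phi)^2$ at $t=0$ yields, for $\phi\in A=T_uA_+$,
\begin{equation*}
L(u)(\phi)\,\omega^2=2\,\omega_u\wedge d\phi ,\qquad\text{equivalently}\qquad L(u)(\phi)=\hodge\big(\omega_u\wedge d\phi\big),
\end{equation*}
and $\int_M L(u)(\phi)\,\omega^2=2\int_M d(\omega_u\wedge\phi)=0$ since $d\omega_u=0$; thus $L(u)$ maps $A$ into $C^\infty(M)_0$, the tangent space of $B_+$ at $\mc{F}(u)$, consistently with $\mc{F}|_{A_+}\colon A_+\to B_+$.

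The substance of the argument is the ellipticity of the first-order operator
\begin{equation*}
\mathcal{P}_u\colon\Omega_\R^1(M)\longrightarrow C^\infty(M)\oplus\Omega_J^-(M)\oplus C^\infty(M),\qquad
\phi\longmapsto\big(d^*\phi,\ d_J^-\phi,\ \hodge(\omega_u\wedge d\phi)\big).
\end{equation*}
Since $A=\{\phi:d^*\phi=0,\ d_J^-\phi=0\}$ and the third component of $\mathcal{P}_u$ restricts on $A$ to $L(u)$, ellipticity of $\mathcal{P}_u$ is precisely the assertion that $L(u)$ is a linear elliptic system on $A$. Source and target both have rank $4=1+2+1$, so it suffices to prove that the principal symbol $\sigma_\xi(\mathcal{P}_u)$ is an isomorphism for every $x\in M$ and $\xi\in T_x^*M\setminus\{0\}$. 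I would verify this pointwise: choose an oriented $g$-orthonormal, $J$-adapted coframe $e^1,\dots,e^4$ at $x$ with $\omega=e^1\wedge e^2+e^3\wedge e^4$ and $\xi$ a positive multiple of $e^1$; as $\sigma_\xi$ is linear in $\xi$ it is enough to treat $\xi=e^1$. On $v=\sum_iv_ie^i$ the symbol of $d^*$ returns (up to sign) $v_1$, and that of $d_J^-$ returns $\tfrac12(v_3,v_4)$ in the basis $\{e^1\wedge e^3-e^2\wedge e^4,\ e^1\wedge e^4+e^2\wedge e^3\}$ of $\Lambda_J^-$; hence the symbol of $(d^*,d_J^-)$ has one-dimensional kernel, the line $\R\,J\xi=\R\,e^2$. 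Writing $\omega_u(x)=a\,\omega+\beta$ with $\beta\in\Lambda_g^-$ (using $\Lambda_J^+=\R\omega\oplus\Lambda_g^-$), say $\beta=b_1(e^1\wedge e^2-e^3\wedge e^4)+b_2(e^1\wedge e^3+e^2\wedge e^4)+b_3(e^1\wedge e^4-e^2\wedge e^3)$, a direct wedge computation gives
\begin{equation*}
\hodge\big(\omega_u\wedge e^1\wedge v\big)=(a-b_1)\,v_2-b_2\,v_3-b_3\,v_4 ,
\end{equation*}
whose restriction to $\R\,e^2$ equals $(a-b_1)\,v_2=\omega_u(e_3,Je_3)\,v_2$, and $\omega_u(e_3,Je_3)>0$ because $\omega_u$ is positive (tames $J$). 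Therefore $\sigma_{e^1}(\mathcal{P}_u)$ is injective and hence, by the rank count, an isomorphism; the bound is uniform over subsets of $A_+$ on which $\omega_u$ stays bounded below.

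I expect the only genuine difficulty to be this final symbol computation, and within it the observation that the single scalar $\omega_u(e_3,Je_3)$ — the one eigenvalue that the gauge conditions $d^*=0$ and $d_J^-=0$ fail to control — is precisely pinned down by the positivity $\omega_u>0$ that defines $A_+$. The torsion of $J$ enters only the lower-order terms of $\mathcal{P}_u$ and plays no part in ellipticity; the almost K\"ahler hypothesis is used only through $d\omega=0$, which is what makes $\mc{F}$ land in $B$ and $L(u)$ land in $C^\infty(M)_0$. Transferring the statement to the relevant Sobolev completions is then routine.
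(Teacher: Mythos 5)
Your proof is correct and takes essentially the same route as the paper, which offers no computation of its own but simply cites Delano\"e and Wang--Zhu for the statement that $L(u)$, coupled with the gauge conditions $d^*\phi=0$ and $d_J^-\phi=0$, is a determined first-order elliptic system. Your explicit symbol verification --- in particular the observation that the one direction $\R\, J\xi$ not controlled by the gauge conditions is handled exactly by the positivity $\omega_u>0$ defining $A_+$ --- is precisely the content of that citation, so nothing needs to be added.
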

Obviously, $A_+ \subset A$ is an open convex set. As done in \cite{WangZ10},
\begin{equation*}
    \mc{F}|_{A_+}: A_+ \to B_+
\end{equation*}
is injective. \\

In summary, by nonlinear analysis \cite{Aubin98}, the following result (cf. Delano\"e \cite[Theorem~2]{Delanoe97} or Wang-Zhu \cite[Proposition~2.6]{WangZ10}) is true: 
\begin{prop}
\label{prop:4}
    The restricted operator
    \begin{equation*}
        \mc{F}|_{A_+}: A_+ \to \mc{F}(A_+) \subset B_+
    \end{equation*}
    is a diffeomorphic map.
\end{prop}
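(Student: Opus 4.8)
The plan is to prove that $\mc{F}|_{A_+} \colon A_+ \to \mc{F}(A_+)$ is a diffeomorphism by combining the local information already collected — namely that $\mc{F}|_{A_+}$ is injective and elliptic — via a standard continuation/open-closed argument in the spirit of Delano\"e and Wang--Zhu. First I would fix a suitable scale of Banach spaces: complete $A$ in the H\"older norms $C^{k,\alpha}$ (equivalently work with the corresponding Sobolev completions), so that $A_+$ becomes an open subset of the Banach space $\overline{A}$ and $\mc{F}$ extends to a smooth map between these completions, mapping into the affine H\"older completion of $B$. The ellipticity of $L(u)$ from the preceding lemma, together with the fact that $L(u)$ acts between spaces of the same "size" (the tangent space $T_uA_+ = A$ at both source and target, after identifying $\mc{F}(u)\omega^2$ with a function via division by $\omega^2$), shows that $L(u)$ is a self-adjoint elliptic operator of index zero; its kernel consists of the infinitesimal directions preserving $\mc{F}$, which by the injectivity of $\mc{F}|_{A_+}$ (and a connectedness argument along the segment $u+t\phi$) must be trivial. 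Hence $L(u)$ is an isomorphism for every $u \in A_+$, and the inverse function theorem gives that $\mc{F}|_{A_+}$ is a local diffeomorphism onto an open subset of $B_+$.

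Next I would upgrade "local diffeomorphism + injective" to "global diffeomorphism onto the image." Injectivity was already recorded (following the convexity argument of Wang--Zhu: if $\mc{F}(u_0) = \mc{F}(u_1)$ then along $u_t = (1-t)u_0 + tu_1 \in A_+$ the function $t \mapsto \int_M \langle \text{something positive}\rangle$ is monotone, forcing $u_0 = u_1$). A continuous injective local homeomorphism is an open map and is a homeomorphism onto its image with continuous inverse; since the inverse is then locally given by the smooth local inverses furnished by the inverse function theorem, it is smooth, so $\mc{F}|_{A_+} \colon A_+ \to \mc{F}(A_+)$ is a diffeomorphism, and $\mc{F}(A_+)$ is open in $B_+$. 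If one instead wants to match the cited references verbatim, one can run the explicit continuity-method bookkeeping: show $\mc{F}(A_+)$ is open (inverse function theorem) and that it is a union of connected components of $B_+$ by checking that limits of solutions stay in $A_+$ using the positivity constraint $\omega + du > 0$ and the $C^\infty$ a priori estimates — but at the level of this local theory the topological argument is cleaner and suffices.

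The main obstacle I expect is verifying that $L(u)$ is genuinely an isomorphism rather than merely Fredholm of index zero with finite-dimensional kernel: one must identify $\ker L(u)$ and rule out constants or the spurious directions $f_i$ living in the kernel of $\mc{W}_J$. This is exactly why the problem is set up on $A$ (i.e.\ on one-forms $u$ with $d_J^- u = 0$, $d^* u = 0$) rather than on all of $\Omega^1_\R(M)$, and why the target space is normalized by the condition $\int_M \varphi \omega^2 = \int_M \omega^2$; the orthogonality conditions kill the cokernel and the normalization pins down the constant. Concretely, $\phi \in \ker L(u)$ means $d\phi \wedge (\omega + du) = 0$ pointwise, and pairing with a positive $(1,1)$-type quantity and integrating (using $d_J^- u = 0$ so that $\omega + du$ is a genuine almost K\"ahler form, hence positive and of type $(1,1)$) forces $d\phi = 0$, whence $\phi = 0$ in $A$. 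Once this is in hand the rest is the formal packaging above, and the H\"older/Sobolev regularity bootstrap is routine elliptic theory. I would cite \cite{Aubin98} for the functional-analytic framework and \cite{Delanoe96,Delanoe97,WangZ10} for the precise form of the continuity argument, and not reproduce the computations in detail.
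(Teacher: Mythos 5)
Your proposal is correct and follows essentially the same route the paper takes (which itself only sketches the argument and defers to Delano\"e and Wang--Zhu): ellipticity of the linearization plus injectivity of $\mc{F}|_{A_+}$, packaged through the inverse function theorem on H\"older completions and the standard ``injective local diffeomorphism $\Rightarrow$ diffeomorphism onto image'' upgrade. The one point to watch is that your step ``$d\phi=0$, whence $\phi=0$ in $A$'' tacitly assumes $A$ contains no nonzero harmonic $1$-forms (when $b_1>0$ one must quotient by $\mathcal{H}^1$ or impose orthogonality to it), but this normalization is glossed over in exactly the same way by the paper's own definition of $A$ and its injectivity claim.
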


\begin{rem}
    In fact, $\mc{F}(A_+) = B_+$ is equivalent to the existence theorem for the generalized Monge-Amp\`ere \labelcref{eq:1.2} on closed almost K\"ahler surfaces (cf. Delano\"e \cite{Delanoe96}, Wang-Zhu \cite{WangZ10}).
\end{rem}

\section{Uniqueness Theorem for the generalized Monge Amp\`ere equation}

This section aims at demonstrating the uniqueness part of the main theorem. Throughout this section, we assume that $(M, \omega, J, g)$ is a closed almost K\"ahler surface. If $\omega_1 = \omega + \mc{D}_J^+(\varphi) > 0$ for some $\varphi$, the metric $g_1(\cdot, \cdot)$ is defined by $g_1(\cdot, \cdot) = \omega_1(\cdot, J \cdot)$. Let $\hodge_g$ and $\hodge_{g_1}$ denote the Hodge star operators corresponding to the metrics $g$ and $g_1$, respectively. \par

Suppose $\varphi_0\in C^\infty(M,J)_0$ satisfies the following equation
\begin{equation*}
    \omega_1 \wedge \mc{D}_J^+(\varphi_0) = (\omega + \mc{D}_J^+(\varphi)) \wedge \mc{D}_J^+(\varphi_0) = 0.
\end{equation*}
This implies that $P_{g_1}^+ \mc{D}_J^+(\varphi_0) = 0$ since  
$$
    \Lambda_J^+ = \R \omega_1 \oplus d_{g_1}^-(\Omega^1(M)).
$$
Thus
\begin{equation*}
    \mc{D}_J^+(\varphi_0) = d\mc{W}_J(\varphi_0) = d_{g_1}^-\mc{W}_J(\varphi_0).
\end{equation*}
But
\begin{align*}
    0 = \int_M \mc{D}_J^+(\varphi_0) \wedge \mc{D}_J^+(\varphi_0) &= \int_M d_{g_1}^-\mc{W}_J(\varphi_0) \wedge d_{g_1}^-\mc{W}_J(\varphi_0) \\
      &= - \norm{d_{g_1}^-\mc{W}_J(\varphi_0)}_{g_1}^2,
\end{align*}
hence
\begin{equation}
    \mc{D}_J^+(\varphi_0) = d\mc{W}_J(\varphi_0) = 0.
\end{equation}
Since
\begin{equation}
    d^* \mc{W}_J(\varphi_0) = 0.
\end{equation}
we have $\mc{W}_J(\varphi_0) = 0$. Hence $\varphi_0$ is a constant. \par

We now suppose that if there are two solutions $\varphi_1$ and $\varphi_2$, i.e.,
\begin{equation*}
     (\omega + \mathcal{D}_J^+(\varphi_1))^2 = (\omega + \mathcal{D}_J^+(\varphi_2))^2 = e^{f}\omega^2 .
\end{equation*}
For each $t \in [0,1]$, set $\varphi_t = t \varphi_1 + (1-t) \varphi_2$, then
\begin{equation*}
    \int_0^1 \diff*{(\omega + \mathcal{D}_J^+(\varphi_t))^2}{t} \dl3t = 0.
\end{equation*}
A direct calculation of $\diff*{(\omega + \mathcal{D}_J^+(\varphi_t))^2}{t}$ shows
\begin{equation*}
    0=\int_0^1 \diff*{(\omega + \mathcal{D}_J^+(\varphi_t))^2}{t} \dl3t=(2\omega + \mathcal{D}_J^+(\varphi_1+\varphi_2)) \wedge \mc{D}_J^+(\varphi_1 - \varphi_2).
\end{equation*}
This implies that $\varphi_1 - \varphi_2$ is a constant. Thus, as K\"ahler case \cite{Calabi57}, we obtain a uniqueness theorem for the generalized Monge-Amp\`ere equation:
\begin{thm}\label{thm:3.1}
    The generalized Monge-Amp\`ere equation \labelcref{eq:1.2} on closed almost K\"ahler surface has at most one solution up to a constant.
\end{thm}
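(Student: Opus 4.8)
The plan is to read off the theorem from the two computations carried out in this section, the first of which is best isolated as a lemma: \emph{if $\varphi\in C^{\infty}(M)$ makes $\omega_1:=\omega+\mc{D}_J^+(\varphi)$ a positive $J$-invariant form and $\varphi_0\in C^{\infty}(M,J)_0$ satisfies $\omega_1\wedge\mc{D}_J^+(\varphi_0)=0$, then $\varphi_0$ is constant.} To prove it, I would first observe that $\mc{D}_J^+(\varphi)=d\mc{W}_J(\varphi)$ is exact, so $\omega_1$ is closed and $(M,\omega_1,J,g_1)$ with $g_1(\cdot,\cdot)=\omega_1(\cdot,J\cdot)$ is again a closed almost K\"ahler surface; hence the splitting $\Lambda_J^+=\R\omega_1\oplus d_{g_1}^-(\Omega^1(M))$ — the analogue for $g_1$ of the decompositions recalled in Section~2 — is available. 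Since $\mc{D}_J^+(\varphi_0)$ is $J$-invariant, the hypothesis $\omega_1\wedge\mc{D}_J^+(\varphi_0)=0$ is exactly the vanishing of its $\R\omega_1$-component, i.e. $P_{g_1}^+\mc{D}_J^+(\varphi_0)=0$, and combining this with exactness gives $\mc{D}_J^+(\varphi_0)=d\mc{W}_J(\varphi_0)=d_{g_1}^-\mc{W}_J(\varphi_0)$.

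Next I would pair this form with itself. On the one hand it is exact, so $\int_M\mc{D}_J^+(\varphi_0)\wedge\mc{D}_J^+(\varphi_0)=0$ by Stokes; on the other hand it is $g_1$-anti-self-dual, so the same integral equals $-\norm{d_{g_1}^-\mc{W}_J(\varphi_0)}_{g_1}^2$. Therefore $\mc{D}_J^+(\varphi_0)=d\mc{W}_J(\varphi_0)=0$, and together with the identity $d^*\mc{W}_J(\varphi_0)=0$ coming from the definition of $\mc{W}_J$ this forces $\mc{W}_J(\varphi_0)=0$, i.e. $\varphi_0\in\ker\mc{W}_J$. Since $\ker\mc{W}_J=\mathrm{Span}\{1,f_1,\dots,f_{b^+-h_J^--1}\}$ and the $f_i$ have been removed in the definition of $C^{\infty}(M,J)_0$, the only such $\varphi_0$ is a constant; this proves the lemma.

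It remains to deduce the theorem. Suppose $\varphi_1,\varphi_2\in C^{\infty}(M,J)_0$ both solve \labelcref{eq:1.2} for the same right-hand side $f$, and put $\varphi_t=t\varphi_1+(1-t)\varphi_2$. Linearity of $\mc{D}_J^+$ gives $\frac{d}{dt}\mc{D}_J^+(\varphi_t)=\mc{D}_J^+(\varphi_1-\varphi_2)$, so integrating $\frac{d}{dt}(\omega+\mc{D}_J^+(\varphi_t))^2$ over $[0,1]$ produces $0$ — the endpoints both equal $e^f\omega^2$ — while a direct expansion evaluates the same integral as $(2\omega+\mc{D}_J^+(\varphi_1+\varphi_2))\wedge\mc{D}_J^+(\varphi_1-\varphi_2)$. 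Now $2\omega+\mc{D}_J^+(\varphi_1+\varphi_2)=(\omega+\mc{D}_J^+(\varphi_1))+(\omega+\mc{D}_J^+(\varphi_2))$ is a sum of two positive $J$-invariant forms, hence positive, and equals $2\bigl(\omega+\mc{D}_J^+\bigl(\frac{\varphi_1+\varphi_2}{2}\bigr)\bigr)$. Applying the lemma with $\varphi=\frac{\varphi_1+\varphi_2}{2}$ and $\varphi_0=\varphi_1-\varphi_2$ shows that $\varphi_1-\varphi_2$ is constant, which is the assertion of the theorem.

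The step I expect to demand the most care is the very first one: verifying that the Hodge-type splitting $\Lambda_J^+=\R\omega_1\oplus d_{g_1}^-(\Omega^1(M))$, and the negativity $\int_M\beta\wedge\beta=-\norm{\beta}_{g_1}^2$ for $\beta\in d_{g_1}^-(\Omega^1(M))$, genuinely transfer to the perturbed almost K\"ahler structure $(M,\omega_1,J,g_1)$ and not merely to the original $(M,\omega,J,g)$; everything after that is linear algebra, Stokes' theorem, and the fundamental theorem of calculus. One must also confirm that $\ker\mc{W}_J\cap C^{\infty}(M,J)_0$ consists only of constants, which rests on the explicit basis of $\ker\mc{W}_J$ recorded in the introduction.
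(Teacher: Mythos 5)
Your proposal is correct and follows essentially the same route as the paper: the splitting $\Lambda_J^+=\R\omega_1\oplus d_{g_1}^-(\Omega^1(M))$ combined with Stokes and anti-self-duality to kill $\mc{D}_J^+(\varphi_0)$, then $d^*\mc{W}_J(\varphi_0)=0$ to conclude $\varphi_0\in\ker\mc{W}_J$, and finally the convexity-path computation $0=(2\omega+\mc{D}_J^+(\varphi_1+\varphi_2))\wedge\mc{D}_J^+(\varphi_1-\varphi_2)$. Your extra step of making explicit that one applies the first computation with $\varphi=\tfrac{1}{2}(\varphi_1+\varphi_2)$ (whose associated form is positive as an average of two positive forms) is a useful clarification of a point the paper leaves implicit.
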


\section{Existence Theorem for the generalized Monge Amp\`ere equation}

In this section, we first establish an estimate for the solution $\varphi$, and the existence part of the main theorem is proved at the end of this section. Consider a closed almost K\"ahler surface $(M, \omega, J, g)$. Recall that the Calabi-Yau equation \labelcref{eq:3.3} is equivalent to the generalized Monge-Amp\`ere equation
\begin{equation*}
      (\omega + d\mathcal{W}_J(\varphi))^2 = e^{f}\omega^2, 
\end{equation*}
where 
\begin{equation*}
    \int_M \omega^2 = \int_M e^{f}\omega^2,
\end{equation*}
$d\mathcal{W}_J(\varphi) = \mathcal{D}_J^+(\varphi)$ and $d^{\hodge} \mathcal{W}_J(\varphi) = 0$.

Assume 
$$
\omega_1^2=e^f\omega^2.
$$ 
We now define a function $\varphi_0 \in C^{\infty}(M)_0$ as follows
\begin{equation*}
    -\frac{1}{2}\Delta_g \varphi_0 = \frac{\omega \wedge (\omega_1 - \omega)}{\omega^2},
\end{equation*}
where $\Delta_g$ denotes the Laplacian associated with the Levi-Civita connection with respect to the almost K\"ahler metric $g$. In general, $\varphi_0\neq\varphi$. The existence of $\varphi$ follows from elementary Hodge theory; it is uniquely determined up to the addition of a constant. Since 
$$
- \omega \wedge dJd\varphi = \frac{1}{2}\Delta_g \varphi \omega^2
$$
for any almost K\"ahler form $\omega$ associated with $g$ and any function $\varphi$, it follows by Lejmi's Theorem (\cref{prop:3}) that there exists $\sigma(\varphi) \in \Omega_J^-$ satisfying the following system:
\begin{equation} \label{eq:4.21}
        d_J^- J d\varphi + d_J^- d^* \sigma(\varphi) = 0.
\end{equation}
Hence,
\begin{equation*}
    \begin{aligned}
        \omega_1 - \omega = \mc{D}_J^+(\varphi) &= dJ d\varphi + dd^* \sigma(\varphi) \\
            & = dd^* (\varphi \omega) + dd^* \sigma(\varphi).
    \end{aligned}
\end{equation*}

Therefore $\mc{W}_J(\varphi)$ can be rewritten as $d^* (\varphi \omega) + d^* \sigma(\varphi)$. Then
\begin{equation*}
    \begin{dcases}
        d \mc{W}_J(\varphi) = \omega_1 - \omega \\
        d^* \mc{W}_J(\varphi) = 0.
    \end{dcases}
\end{equation*}

Let $\omega_1 = \omega + \mc{D}_J^+(\varphi)$, where both $\omega_1$ and $\omega$ are symplectic forms with $[\omega_1] = [\omega]$. For any $p \in M$, by the Darboux Theorem, we may assume, without loss of generality, that on a Darboux coordinate neighborhood $U_p$:
\begin{equation} \label{eq:4.1}
\begin{aligned}
    \omega(p) &= \sqrt{-1}(\theta^1 \wedge \overline{\theta^1} + \theta^2 \wedge \overline{\theta^2}), \\
    g(p) &= 2(|\theta^1|^2 + |\theta^2|^2),
\end{aligned}
\quad
\begin{aligned}
    \omega_1(p) &= \sqrt{-1}(a_1 \theta^1 \wedge \overline{\theta^1} + a_2 \theta^2 \wedge \overline{\theta^2}), \\
    g_1(p) &= 2(a_1 |\theta^1|^2 + a_2 |\theta^2|^2),
\end{aligned}
\end{equation}
where $0 < a_1 < a_2$ (using simultaneous diagonalization).

\begin{lem}
\label{lem:1}
    For any point $p$ in an almost K\"ahler surface $M$, using the coordinates in \cref{eq:4.1}, we have
    \begin{equation*}
        e^{f(p)} = a_1 a_2 \leq \abs{g_1(p)}_g^2 , \quad \abs{d \mc{W}_J(\varphi)(p)}_g^2 = 2 [(a_1-1)^2 + (a_2-1)^2],
    \end{equation*}
    and
    \begin{equation*}
    {\Delta_g \varphi}(p) = 2 - (a_1 + a_2) \leq 2(1 - e^{f(p)}) < 2.
    \end{equation*}
\end{lem}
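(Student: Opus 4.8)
The plan is to carry out all three assertions by a pointwise computation at $p$, working in the simultaneously diagonalizing unitary coframe $\{\theta^1,\theta^2\}$ of \cref{eq:4.1} and passing to the associated $g$-orthonormal real coframe $e^1,\dots,e^4$ with $\theta^j=\tfrac1{\sqrt2}(e^{2j-1}+\sqrt{-1}\,e^{2j})$; then $\omega(p)=e^1\wedge e^2+e^3\wedge e^4$ and $\mc{D}_J^+(\varphi)(p)=\omega_1(p)-\omega(p)=(a_1-1)\,e^1\wedge e^2+(a_2-1)\,e^3\wedge e^4$.

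First I would read off the top powers from \cref{eq:4.1}: a direct wedge computation gives $\omega_1(p)^2=a_1a_2\,\omega(p)^2$, so the equation $\omega_1^2=e^f\omega^2$ forces $e^{f(p)}=a_1a_2$. In the same real coframe $g_1(p)$ is the diagonal tensor with entries $a_1,a_1,a_2,a_2$, whence $\abs{g_1(p)}_g^2=2(a_1^2+a_2^2)$, and $e^{f(p)}=a_1a_2\le a_1^2+a_2^2\le\abs{g_1(p)}_g^2$ by the arithmetic--geometric mean inequality. For the second identity, $d\mc{W}_J(\varphi)(p)=\mc{D}_J^+(\varphi)(p)$ has the displayed expression in the $e^j$ coframe, and evaluating the induced pointwise norm on $2$-forms gives $\abs{d\mc{W}_J(\varphi)(p)}_g^2=2[(a_1-1)^2+(a_2-1)^2]$.

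For the Laplacian identity I would combine two ingredients: the wedge computation $\omega\wedge(\omega_1-\omega)(p)=\tfrac12\bigl((a_1-1)+(a_2-1)\bigr)\,\omega^2(p)$, immediate from \cref{eq:4.1}; and the relation $\omega\wedge\mc{D}_J^+(\varphi)=\omega\wedge dJd\varphi=-\tfrac12\Delta_g\varphi\,\omega^2$ recorded just before the lemma — equivalently $\Lambda_\omega\mc{D}_J^+(\varphi)=-\Delta_g\varphi$ — which uses $d\omega=0$, the pointwise identity $\mu\wedge\omega=\hodge(J\mu)$ for $1$-forms, and the fact that the Lejmi correction term $dd^*\sigma(\varphi)$ in $\mc{D}_J^+(\varphi)=dd^*(\varphi\omega)+dd^*\sigma(\varphi)$ contributes nothing to the $\omega$-trace. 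Comparing the two expressions yields $\Delta_g\varphi(p)=2-(a_1+a_2)$. Finally, $a_1+a_2\ge 2\sqrt{a_1a_2}=2\sqrt{e^{f(p)}}$ together with $a_1,a_2>0$ gives the stated upper bound and, a fortiori, $\Delta_g\varphi(p)<2$.

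I expect the only step with genuine content — as opposed to bookkeeping — to be the vanishing of the $\omega$-trace of $dd^*\sigma(\varphi)$, equivalently the statement that the function $\varphi$ occurring here may be taken to be the one determined by the Poisson equation $-\tfrac12\Delta_g\varphi=\omega\wedge(\omega_1-\omega)/\omega^2$; once that is granted, everything collapses to algebra in the frame \cref{eq:4.1}, the one care-point being to keep the normalization of the induced inner product on forms and of $\Lambda_\omega$ versus $\wedge\,\omega$ consistent across the three identities.
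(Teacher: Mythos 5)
Your computation follows the same route as the paper's (one-line) proof: everything is read off pointwise from the simultaneously diagonalizing coframe of \cref{eq:4.1}, and your treatment of the first two identities ($e^{f(p)}=a_1a_2\le 2(a_1^2+a_2^2)=\abs{g_1(p)}_g^2$ and $\abs{d\mc{W}_J(\varphi)(p)}_g^2=2[(a_1-1)^2+(a_2-1)^2]$) is correct and in fact more explicit than the paper, which simply says ``direct calculations.'' Two points do not close, however, and you should not wave them through.

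First, the Laplacian identity. You correctly isolate the crux: $\Delta_g\varphi(p)=2-(a_1+a_2)$ follows from $\omega\wedge(\omega_1-\omega)=\tfrac12(a_1+a_2-2)\,\omega^2$ \emph{only if} the $\omega$-trace of the correction term $dd^*\sigma(\varphi)$ vanishes pointwise, i.e.\ only if $\varphi$ coincides with the function $\varphi_0$ defined by the Poisson equation $-\tfrac12\Delta_g\varphi_0=\omega\wedge(\omega_1-\omega)/\omega^2$. But you leave this unproved, and the paper itself states explicitly, right before the lemma, that ``in general, $\varphi_0\neq\varphi$'' --- which is precisely the statement that $\omega\wedge dd^*\sigma(\varphi)$ need \emph{not} vanish pointwise (if it did, $\varphi$ and $\varphi_0$ would solve the same Poisson equation and agree up to a constant). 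So as written your argument establishes the identity for $\varphi_0$, not for the Monge--Amp\`ere potential $\varphi$; this gap is real and cannot be closed by ``bookkeeping.'' Second, the chain of inequalities at the end: AM--GM gives $a_1+a_2\ge 2\sqrt{a_1a_2}=2e^{f(p)/2}$, hence $\Delta_g\varphi(p)\le 2\bigl(1-e^{f(p)/2}\bigr)$, which is \emph{not} the stated bound $2\bigl(1-e^{f(p)}\bigr)$; the stated bound would require $a_1+a_2\ge 2a_1a_2$, which fails whenever $a_1a_2>1$ (e.g.\ $a_1=a_2=2$). The exponent $f(p)$ in the lemma is presumably a typo for $f(p)/2$, but your claim that your inequality ``gives the stated upper bound'' is incorrect as written; only the final conclusion $\Delta_g\varphi(p)<2$ survives, via $a_1,a_2>0$.
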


\begin{proof}
    Since
    \begin{equation*}
    \begin{aligned}
        \dl \vol_{g_1}|_p = \frac{\omega_1^2(p)}{2!} &= -a_1 a_2 \theta^1 \wedge \overline{\theta^1} \wedge \theta^2 \wedge \overline{\theta^2} \\
            &= -e^{f(p)} \theta^1 \wedge \overline{\theta^1} \wedge \theta^2 \wedge \overline{\theta^2}\ (\text{by } \labelcref{eq:1.2}),
    \end{aligned}
    \end{equation*}
    then {$e^{f(p)}=a_1 a_2\le2(a_1^2+a_2^2)=\abs{g_1(p)}_g^2$}. The others can be obtained similarly by direct calculations using \labelcref{eq:4.1}.
\end{proof}

Consider a family of symplectic forms on the almost K\"ahler suface $(M,\omega,J,g)$
\begin{equation*}
    \omega_s = (1-s)\omega + s\omega_1, s \in [0,1].
\end{equation*}
Then, $\omega_0 = \omega$, $\omega_{\frac{1}{2}} = \frac{1}{2}(\omega+\omega_1)$. Moreover, we have
\begin{equation}
\label{eq:4.2}
    -2\omega_{\frac{1}{2}} < \omega_1 - \omega < 2\omega_{\frac{1}{2}}.
\end{equation}
Let $g_s(\cdot,\cdot) = \omega_s(\cdot,J\cdot)$ and $d^{\hodge_s} = - \hodge_{g_s} d \hodge_{g_s}$. Define the almost K\"ahler potentials $\varphi_s$ \cite{Weinkove07} by
\begin{equation} \label{eq:4.3}
    -\frac{1}{2} \Delta_{g_s} \varphi_s = \frac{\omega_s \wedge (\omega_1 -\omega)}{\omega_s^2}
\end{equation}
Notice that, in general, $\varphi_0 \neq \varphi$. By \cref{prop:3}, since $\omega_1 - \omega \in \Omega_J^+ \cap d(\Omega^1)$, it is easy to see that (c.f. Section 2 \cite{Weinkove07})
\begin{equation} \label{eq:4.4}
    \omega_1 - \omega = \mc{D}_{J,s}^+(\varphi_s) := dJd\varphi_s + da_s(\varphi),\ s \in [0,1],
\end{equation}
where $a_s(\varphi)\in\Omega^1_\R$, and
\begin{equation*}
\begin{dcases}
    d^{*_s}a_s(\varphi)=0\\
    d_J^-Jd\varphi_s + d_{g_s}^+a_s(\varphi) = 0\\
    \omega_s\wedge da_s(\varphi)=0
\end{dcases}.
\end{equation*}

We now give an zero order estimate for $\varphi_{1}$, see Proposition~3.1 in \cite{ChuTW19}.
\begin{prop}
\label{prop:6}
There is a constant $C$ depending only on $M$, $\omega$, $J$, and $f$ such that
    \begin{equation*}
        \norm{\varphi_{1}}_{C^0(g)} \leq C(M,\omega,J,f).
    \end{equation*}
\end{prop}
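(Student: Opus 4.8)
The plan is to adapt the Moser iteration / integration-by-parts argument used by Chu–Tosatti–Weinkove \cite{ChuTW19} for the zero-order estimate of the Calabi–Yau equation on symplectic $4$-manifolds, using crucially that $\omega_1-\omega = \mathcal D_J^+(\varphi)$ is \emph{exact}, so that $\int_M \varphi_1(\omega_1-\omega)\wedge\omega_1^{k}$-type quantities can be integrated by parts against the explicit primitive $\mathcal W_J(\varphi)$ with $d^\ast\mathcal W_J(\varphi)=0$. First I would record the sign/positivity facts from \cref{lem:1}: $\Delta_g\varphi_1 < 2$ pointwise, which gives an immediate \emph{one-sided} bound $\sup_M\varphi_1\le C$ after testing against a suitable auxiliary function (or simply via the Green's function for $\Delta_g$, since $\varphi_1\in C^\infty(M)_0$ has mean value zero). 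Likewise \labelcref{eq:4.2} and the family $\omega_s=(1-s)\omega+s\omega_1$ give uniform two-sided bounds $0<a_1<a_2$ with $a_1a_2=e^f$ controlled above and below, so all the metrics $g_s$ are uniformly comparable to $g$ on the region where we have control, which is what makes the interpolation in $s$ legitimate.

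The heart of the matter is the lower bound $\inf_M\varphi_1\ge -C$. Here I would run the standard argument: set $v=\varphi_1-\inf_M\varphi_1+1\ge 1$, and for $p\ge 1$ compute $\int_M v^{p}\,\omega\wedge(\omega_1-\omega)$; rewriting $\omega_1-\omega=d\mathcal W_J(\varphi)$ and integrating by parts moves a derivative onto $v^p$, producing a good term $\int_M v^{p-1}|\nabla v|^2_g$ (with the right sign because $\omega\wedge(\omega_1-\omega)=-\tfrac12\Delta_g\varphi_1\,\omega^2$ and $\Delta_g\varphi_1<2$ keeps the zeroth-order term on the favorable side) against an error controlled by $\|\mathcal W_J(\varphi)\|$, which by the closed-range property of $\mathcal D_J^+$ (Proposition in \S2) and elliptic estimates is bounded by $\|\mathcal D_J^+(\varphi)\|_{L^2}=\|\omega_1-\omega\|_{L^2}$, and the latter is controlled purely in terms of $\omega,J,f$ via $\int_M|\omega_1-\omega|^2_g\,\omega^2 = \int_M(2-a_1-a_2)\,(\cdots)$ from \cref{lem:1}. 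This yields a reverse-Hölder/$L^p$ growth inequality $\|v\|_{L^{\chi p}}\le (Cp)^{1/p}\|v\|_{L^p}$ with Sobolev exponent $\chi>1$, and Moser iteration upgrades an $L^{p_0}$ bound to the $L^\infty$ bound $\sup_M v\le C\|v\|_{L^{p_0}}$. The remaining point is to bound $\|v\|_{L^{p_0}}$ (equivalently $\int_M|\varphi_1|\,\omega^2$) in terms of the data: since $\int_M\varphi_1\,\omega^2=0$, a Poincaré inequality reduces this to $\|\nabla\varphi_1\|_{L^2}$, and one more integration by parts, $\int_M|\nabla\varphi_1|^2_g\,\omega^2 = -\int_M\varphi_1\Delta_g\varphi_1\,\omega^2 = 2\int_M\varphi_1\,\dfrac{\omega\wedge(\omega_1-\omega)}{\omega^2}\,\omega^2$, combined with the exactness of $\omega_1-\omega$ and Cauchy–Schwarz against $\|\mathcal W_J(\varphi)\|_{L^2}\le C(\omega,J,f)$, closes the loop after absorbing.

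I expect the main obstacle to be making the integration-by-parts bookkeeping genuinely \emph{uniform}: one must ensure every constant that appears — the Sobolev constant of $(M,g)$, the Poincaré constant, the elliptic constant converting $\|\mathcal W_J(\varphi)\|_{H^1}$ to $\|\mathcal D_J^+(\varphi)\|_{L^2}$, and the comparison constants between $g$ and the $g_s$ — depends only on the fixed background $(M,\omega,J)$ and on $f$, and not on the unknown $\varphi_1$. The fact that $\mathcal W_J$ is a fixed linear operator with closed range on the fixed almost Kähler surface is exactly what guarantees this; the almost-complex (non-integrable) features enter only through the error term $da_s(\varphi)$ in \labelcref{eq:4.4}, which, because of the three-line system it satisfies ($d^{\ast_s}a_s(\varphi)=0$, $d_J^-Jd\varphi_s+d_{g_s}^+a_s(\varphi)=0$, $\omega_s\wedge da_s(\varphi)=0$), contributes no sign-indefinite zeroth-order term and is $L^2$-bounded by the same data. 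Once uniformity is in hand, Moser iteration gives $\|\varphi_1\|_{C^0(g)}\le C(M,\omega,J,f)$ as claimed.
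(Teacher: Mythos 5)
The paper offers no proof of this proposition at all: it is imported wholesale from Chu--Tosatti--Weinkove \cite{ChuTW19}, Proposition~3.1, and the remark immediately after the statement identifies the method there as the Alexandroff--Bakelman--Pucci maximum principle applied to a scalar equation. So your Moser-iteration route is in any case not the paper's; more importantly, it has a genuine gap at its core. You have conflated $\varphi_1$ with $\varphi_0$. The function $\varphi_1$ in \cref{prop:6} is the $s=1$ member of the family \labelcref{eq:4.3}, so it satisfies $-\tfrac12\Delta_{g_1}\varphi_1=\omega_1\wedge(\omega_1-\omega)/\omega_1^2$, with the Laplacian of the \emph{unknown} metric $g_1$. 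The identity $\omega\wedge(\omega_1-\omega)=-\tfrac12\Delta_g\varphi_1\,\omega^2$, which you invoke both for the favourable sign of the zeroth-order term and for the bound $\|\nabla\varphi_1\|_{L^2}^2=-\int_M\varphi_1\Delta_g\varphi_1\,\omega^2$, is the defining equation of the \emph{different} function $\varphi_0$ (the paper explicitly warns that $\varphi_0\neq\varphi$, and the inequality $\Delta_g\varphi<2$ in \cref{lem:1} refers to that function). Running your integration by parts with the correct equation produces the Dirichlet energy $\int_M v^{p-1}\abs{\nabla v}^2_{g_1}\,dV_{g_1}$ measured in $g_1$; to feed this into a Sobolev or Poincar\'e inequality with constants depending only on $(M,g)$ you need the two-sided comparison $C^{-1}g\le g_1\le Cg$, which is exactly the content of \cref{prop:7} and \cref{prop:8} --- and those are derived \emph{from} \cref{prop:6} (the test function in the proof of \cref{prop:7} is $\log u-2A'\varphi_1$ and its conclusion involves $e^{A(\varphi_1-\inf_M\varphi_1)}$). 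As written, the argument is circular.

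There is a second, independent gap in the control of your error term. You bound everything by $\|\mathcal W_J(\varphi)\|$, reduced via the closed-range property to $\|\omega_1-\omega\|_{L^2}$, and assert that the latter is controlled by $\omega,J,f$ through \cref{lem:1}. It is not: in the normal form \labelcref{eq:4.1} one has $\abs{\omega_1-\omega}_g^2=2[(a_1-1)^2+(a_2-1)^2]$, whereas the only a priori integral information is $\int_M(a_1+a_2)\,\omega^2=2[\omega]\cdot[\omega_1]$ (an $L^1$ bound on $\tr_g g_1$, which is cohomological) together with the pointwise identity $a_1a_2=e^f$. An $L^2$ bound on $a_1+a_2$ is precisely what is \emph{not} available before the second-order estimate; this is the same obstruction that forced Tosatti--Weinkove--Yau \cite{TosattiWY08} to assume control of Tian's $\alpha$-integral (or an $L^1$ bound on the potential) rather than prove it. So the quantity you propose to iterate against is not a priori finite in terms of the stated data, and the reverse-H\"older inequality cannot be closed. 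Any correct proof must either work at the level of a genuinely scalar equation where ABP applies, or supply a new mechanism for the a priori $L^1$/$L^2$ control of $\varphi_1$ and $\tr_g g_1$; neither is present in the proposal.
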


\begin{rem}
    Chu-Tossatti-Weinkove \cite[Proposition~3.1]{ChuTW19}, Tossatti and Weinkove \cite{TosattiW18}, or Sz\'ekelyhidi \cite{Szekelyhidi18} obtained the $C^0$-estimate for $\varphi_1$, by using Alexandroff-Bakelman-Pucci maximum principle in the case of the complex Monge-Amp\`ere equation.
\end{rem}

As done in \cite[Theorem~3.1]{TosattiWY08} and \cite[Theorem~3.1]{Weinkove07}, we have the following proposition.
\begin{prop}
\label{prop:7}
    Let $g_1$ be an almost K\"ahler metric solving the Calabi-Yau equation \labelcref{eq:3.3} on closed almost K\"ahler surface $(M,\omega,g,J)$, where $g_1 = \omega_1(\cdot,J\cdot)$. Then there exist constants $C$ and $A$ depending only on $J$, $R$, $\sup \abs{f}$ and lower bound of $\Delta_g f$ such that
    \begin{equation*}
        \tr_g g_1 \leq C e^{A(\varphi_1 - \inf_M \varphi_1)} \leq C(M,\omega,J,f).
    \end{equation*}
\end{prop}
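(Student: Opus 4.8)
The plan is to prove this a priori estimate by the maximum principle, following the almost-K\"ahler adaptation of Yau's second-order estimate due to Weinkove \cite{Weinkove07} and Tosatti--Weinkove--Yau \cite{TosattiWY08}, and exploiting the pointwise algebra in complex dimension two recorded in \cref{lem:1}. Since $g_1$ is an almost K\"ahler metric (equivalently $\omega_1$ is closed, positive and $J$-invariant), $(M,\omega_1,J,g_1)$ is a closed almost K\"ahler surface in its own right, so I will work with its Hodge--de Rham Laplacian $\Delta_{g_1}$. Set
\[
    Q := \log\tr_g g_1 - A(\varphi_1 - \inf_M\varphi_1), \qquad A > 0 \text{ to be fixed below},
\]
which is smooth since $\tr_g g_1$ is bounded below by a positive constant (by \cref{lem:1}, as $a_1 a_2 = e^f$). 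Let $x_0 \in M$ be a maximum point of $Q$, so that $\nabla Q(x_0) = 0$ and $\Delta_{g_1} Q(x_0) \ge 0$; one may assume $\tr_g g_1(x_0)$ exceeds any prescribed constant, for otherwise $Q(x_0)$, hence $Q$, is already bounded above and there is nothing to prove.

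Two computations remain. First, \labelcref{eq:4.3} with $s = 1$, together with the identity $\omega_1 \wedge(\omega_1-\omega)/\omega_1^2 = 1 - \omega_1\wedge\omega/\omega_1^2$ and the pointwise identity $\omega_1\wedge\omega/\omega_1^2 = \tfrac14\tr_{g_1}g$ (immediate in the frame \labelcref{eq:4.1}, using $a_1 a_2 = e^f$), yields
\[
    \Delta_{g_1}\varphi_1 = \tfrac12\tr_{g_1}g - 2 .
\]
Second, the heart of the matter is the Yau-type inequality
\[
    \Delta_{g_1}\log\tr_g g_1 \le C_1\,\tr_{g_1}g + C_1,
\]
where $C_1$ depends only on $J$, on a bound $R$ for the curvature of $g$, on $\sup\abs{f}$, and on a lower bound for $\Delta_g f$. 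One derives it by expanding $\Delta_{g_1}\tr_g g_1$ in a local $g$-unitary coframe, using $d\omega = d\omega_1 = 0$ and the equation $\omega_1^2 = e^f\omega^2$, commuting covariant derivatives (which produces the curvature-of-$g$ terms), then passing to the logarithm and absorbing the first-order terms so produced via the Cauchy--Schwarz inequality together with $\nabla Q(x_0) = 0$; the term carrying $\Delta_g f$ enters divided by $\tr_g g_1$ and is controlled using the hypothesized bound on $\Delta_g f$ and the lower bound for $\tr_g g_1$ from \cref{lem:1}.

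Granting these, at $x_0$ one obtains
\[
    0 \le \Delta_{g_1}Q \le C_1\tr_{g_1}g + C_1 - A\Parn*{\tfrac12\tr_{g_1}g - 2} = \Parn*{C_1 - \tfrac{A}{2}}\tr_{g_1}g + C_1 + 2A .
\]
Fixing $A := 4C_1 + 1$ forces $\tr_{g_1}g(x_0) \le C_2$, and since $\tr_{g_1}g = e^{-f}\tr_g g_1$ in the frame \labelcref{eq:4.1}, this gives $\tr_g g_1(x_0) \le e^{\sup\abs{f}} C_2 =: C_3$, hence $Q(x_0) \le \log C_3$. As $Q \le Q(x_0)$ on $M$ and $\varphi_1 - \inf_M\varphi_1 \ge 0$, we conclude $\tr_g g_1 \le C_3\, e^{A(\varphi_1 - \inf_M\varphi_1)}$, with $C_3$ and $A$ of the stated dependence. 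The remaining bound $\tr_g g_1 \le C(M,\omega,J,f)$ then follows at once from the $C^0$-estimate of \cref{prop:6}, which gives $\varphi_1 - \inf_M\varphi_1 \le 2\norm{\varphi_1}_{C^0(g)} \le C(M,\omega,J,f)$.

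The step I expect to be the main obstacle is the Yau-type inequality above. Unlike in the integrable case, the non-integrability of $J$ contributes, in the expansion of $\Delta_{g_1}\tr_g g_1$, extra terms built from the Nijenhuis tensor of $J$ and its covariant derivatives; these are smooth tensors of the fixed data $(M,\omega,J,g)$, hence pointwise bounded, and the work — as in \cite{Weinkove07,TosattiWY08} — is to check that each such term is dominated by $C_1\tr_{g_1}g + C_1$. Complex dimension two is what keeps the bookkeeping tractable: there the Calabi--Yau equation pins down $a_1 a_2 = e^f$ and makes $\tr_{g_1}g$ and $\tr_g g_1$ mutually proportional, so a bound on one at $x_0$ immediately yields a bound on the other.
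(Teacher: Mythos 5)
Your proposal follows essentially the same route as the paper's proof: the maximum principle applied to $\log\tr_g g_1 - A\varphi_1$, with $\Delta_{g_1}\varphi_1$ computed from \labelcref{eq:4.3} at $s=1$, and the two-dimensional identity $\tr_{g_1}g = e^{-f}\tr_g g_1$ used to convert the resulting bound on $\tr_{g_1}g(x_0)$ into one on $\tr_g g_1(x_0)$, followed by exponentiation and \cref{prop:6}. The one substantive gap is the Yau-type inequality $\Delta_{g_1}\log\tr_g g_1 \le C_1\tr_{g_1}g + C_1$, which you correctly flag as the heart of the matter but only sketch: in the paper this is precisely \cref{lem:6} (proved via \cref{lem:4,lem:5} using the second canonical connection, with the Nijenhuis-tensor terms you anticipate appearing explicitly), combined with choosing $A'$ so that $\mc{R}_{i\bar jk\bar l} + A'\delta_{ij}\delta_{kl} \ge 0$ in order to absorb the curvature term into $-A'\tr_{g_1}g$. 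A minor difference worth noting is that the paper obtains its inequality globally, controlling the first-order term $|du|^2_{g_1}/u$ by a Cauchy--Schwarz estimate, rather than only at the maximum point via $\nabla Q(x_0)=0$ as you propose; both variants work. Your computation $\Delta_{g_1}\varphi_1 = \tfrac12\tr_{g_1}g - 2$ is the correct consequence of \labelcref{eq:4.3} (the paper's own proof writes $4 - \tr_{g_1}g$ at this step, which differs by a normalization factor), and the rest of your argument is sound.
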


We will prove this proposition later. For now, assume $g$ and $g_1$ take the form of \cref{eq:4.1} at $p\in M$. Therefore, there exists a constant $C$, depending only on $M,\ \omega,\ J,\ f$ such that the following holds (the constant C can vary from line to line)
\begin{equation}\label{eq:boundmet}
    g_1 \leq C(M,\omega,J,f)g, \quad \omega_1 \leq C(M,\omega,J,f)\omega.
\end{equation}
A combination of \cref{prop:7} and \cref{lem:1} yields
\begin{equation*}
    2e^{f/2} \leq \tr_g g_1 \leq C(M,\omega,J,f).
\end{equation*}

By the definition of $\varphi_1$, we have
\begin{equation*}
    -1 \leq -\frac12 \Delta_g \varphi_1 \leq C(M,\omega,J,f)
\end{equation*}

Because $\abs{\Delta_g \varphi}$ is bounded, the same argument as in the proof of \cref{prop:6} shows
\begin{equation*}
    \norm{\varphi}_{C^0(g)} \leq C(M,\omega,J,f).
\end{equation*}
Schauder's estimate \cite[Theorem~6.6]{GilbargTrudinger77} implies
\begin{equation*}
    \norm{\varphi}_{C^{k+2,\alpha}(g)}, \norm{\varphi_1}_{C^{k+2,\alpha}(g)} \leq C(M,\omega,J, \norm{f}_{C^{k,\alpha}(g)}),
\end{equation*}
for nonnegative integer $k$ and $\alpha \in (0,1)$.\par

  By \cref{lem:1} and \cref{prop:7}, we have the following proposition:
\begin{prop} \label{prop:8}
    Suppose that $g_1$ is a solution of the generalized Monge-Amp\`ere equation \labelcref{eq:1.2}. Then
    \begin{alignat*}{2}
            \norm{2(e^{\frac{f}{2}}-1)}_{C^0(g)} &\leq  \norm{d\mc{W}_J(\varphi)}_{C^0(g)}  \leq C_1, \\
            \norm{2e^{\frac{f}{2}}}_{C^0(g)} &\leq  \norm{g_1}_{C^0(g)}  \leq C_2,
    \end{alignat*}
    and
    \begin{equation*}
        \norm{2e^{-\frac{f}{2}}}_{C^0(g)} \leq \norm{g_1^{-1}}_{C^0(g)}  \leq C_3.
    \end{equation*}
    where $C_1, C_2$ and $C_3$ are constants depending on $M,\omega, J$ and $f$. 
\end{prop}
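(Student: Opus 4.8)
The strategy is to translate all three estimates into statements about the eigenvalues of $g_1$ relative to $g$ at an arbitrary point, and then to control those eigenvalues uniformly over $M$. Fix $p\in M$ and use the simultaneous diagonalization \labelcref{eq:4.1}, writing $0<a_1\le a_2$ for the eigenvalues of $g_1$ against $g$ at $p$. By \cref{lem:1} we have $a_1a_2=e^{f(p)}$ together with the identities
\[
    \abs{d\mc{W}_J(\varphi)(p)}_g^2=2\bigl[(a_1-1)^2+(a_2-1)^2\bigr],\qquad \abs{g_1(p)}_g^2=2(a_1^2+a_2^2),
\]
and the same computation applied to $g^{-1}$ gives $\abs{g_1^{-1}(p)}_g^2=2(a_1^{-2}+a_2^{-2})$. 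So it suffices to produce uniform two-sided bounds $0<\lambda_0\le a_1\le a_2\le\Lambda_0$ with $\lambda_0,\Lambda_0$ depending only on $M,\omega,J,f$, and then substitute.

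For the upper bounds I would invoke \cref{prop:7}: in the normalization \labelcref{eq:4.1} one has $\tr_g g_1=2(a_1+a_2)$, so \cref{prop:7} gives $a_1+a_2\le C(M,\omega,J,f)$ and in particular $a_2\le C$. Since $f\in C^\infty(M)$ and $M$ is closed, $0<m:=\min_M e^f\le e^{f(p)}\le\max_M e^f<\infty$, whence $a_1=e^{f(p)}/a_2\ge m/C>0$. This yields $0<m/C\le a_1\le a_2\le C$ uniformly in $p$; inserting these into the three eigenvalue identities above and taking the supremum over $p\in M$ produces the constants $C_1,C_2,C_3$.

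For the lower bounds I would argue pointwise with the arithmetic--geometric mean inequality: $\abs{g_1(p)}_g^2=2(a_1^2+a_2^2)\ge 4a_1a_2=4e^{f(p)}$, so $\abs{g_1(p)}_g\ge 2e^{f(p)/2}$; likewise $\abs{g_1^{-1}(p)}_g^2\ge 4(a_1a_2)^{-1}=4e^{-f(p)}$ gives $\abs{g_1^{-1}(p)}_g\ge 2e^{-f(p)/2}$; and for $d\mc{W}_J(\varphi)$, at a point $p_0\in M$ realizing $\sup_M|e^{f/2}-1|$ one combines the eigenvalue identity with $a_1+a_2\ge 2\sqrt{a_1a_2}=2e^{f(p_0)/2}$ (using $a_1\le e^{f(p_0)/2}\le a_2$) to bound $\abs{d\mc{W}_J(\varphi)(p_0)}_g$ below by $2\bigl|e^{f(p_0)/2}-1\bigr|=\norm{2(e^{f/2}-1)}_{C^0(g)}$. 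Taking $\sup_M$ in the first two and using $p_0$ in the third gives the stated lower bounds. The only substantive input is \cref{prop:7}, whose proof is deferred; everything else is linear algebra at a point plus compactness of $M$. One should also note there is no circularity: the constants in \cref{prop:7} depend only on $J$, the geometry of $(M,g)$, $\sup\abs f$ and the lower bound of $\Delta_g f$, not on the present proposition. The main obstacle is therefore entirely contained in \cref{prop:7} itself.
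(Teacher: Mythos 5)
Your overall strategy --- reduce all six inequalities to the relative eigenvalues $0<a_1\le a_2$ of $g_1$ at a point via \cref{lem:1}, obtain two-sided eigenvalue bounds from \cref{prop:7} together with $a_1a_2=e^f$ for the upper estimates, and use AM--GM pointwise for the lower estimates --- is exactly what the paper intends; its entire ``proof'' is the sentence ``By \cref{lem:1} and \cref{prop:7}.'' The upper bounds $C_1,C_2,C_3$ and the lower bounds for $\norm{g_1}_{C^0(g)}$ and $\norm{g_1^{-1}}_{C^0(g)}$ are correct as you argue them: $2(a_1^2+a_2^2)\ge 4a_1a_2=4e^f$ and $2(a_1^{-2}+a_2^{-2})\ge 4(a_1a_2)^{-1}=4e^{-f}$ hold at every point, so evaluating at a maximizer of $\pm f$ and taking suprema gives those claims.

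The gap is in the first lower bound. Writing $(a_1-1)^2+(a_2-1)^2=(a_1^2+a_2^2)-2(a_1+a_2)+2$, the term $a_1+a_2$ enters with a minus sign, so the inequality $a_1+a_2\ge 2\sqrt{a_1a_2}$ that you invoke bounds it in the \emph{wrong} direction for a lower estimate. The pointwise inequality you are implicitly using, $(a_1-1)^2+(a_2-1)^2\ge 2\bigl(\sqrt{a_1a_2}-1\bigr)^2$, is in fact false: for $a_1=0.1$, $a_2=0.2$ the left side is $1.45$ while the right side is $2(\sqrt{0.02}-1)^2\approx 1.474$. The natural repair is Cauchy--Schwarz, $(a_1-1)^2+(a_2-1)^2\ge\tfrac12(a_1+a_2-2)^2$, combined with $a_1+a_2-2\ge 2e^{f(p_0)/2}-2$; this does yield $\abs{d\mc{W}_J(\varphi)(p_0)}_g\ge 2\abs{e^{f(p_0)/2}-1}$, but only when $f(p_0)\ge 0$, so that both sides of the squared inequality are nonnegative. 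Since $\int_M e^f\omega^2=\int_M\omega^2$ forces $\max_M f\ge 0$, this settles the case where $\sup_M\abs{e^{f/2}-1}$ is attained at a point with $f\ge 0$. But when that supremum is attained where $f<0$, the pointwise argument at $p_0$ only gives $(a_1-1)^2\ge(e^{f(p_0)/2}-1)^2$ (from $a_1\le e^{f(p_0)/2}<1$), hence the constant $\sqrt2$ rather than $2$, and I do not see how to recover the stated factor $2$ from \cref{lem:1} alone; the counterexample above shows it cannot be done pointwise. This is a genuine gap in your argument --- and one on which the paper itself offers no detail.
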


\begin{rem}
    Note that $\norm{g_1}_{C^0(g)} \leq C(M,\omega,J,f)$ can be regarded as the generalized second derivative estimate of the almost K\"ahler potential $\varphi$ \cite{Yau78}.
\end{rem}

The proof of \cref{prop:7} involves some calculations of curvature identities, which we present here. Let $(M^{2n}, J)$ be an almost complex manifold of complex dimension $n \geq 2$ with almost K\"ahler metrics $g$ and $\tilde{g}$. Let $\theta^i$ and $\tilde{\theta}^i$ denote local unitary coframes for $g$ and $\tilde{g}$, respectively. Denote by $\nabla_g^1$ and $\nabla_{\tilde{g}}^1$ the associated second canonical connections. We use $\Theta$ (resp. $\Psi$) to denote the torsion (resp. curvature) of $\nabla_g^1$, and $\tilde{\Theta}$ (resp. $\widetilde{\Psi}$) to denote the torsion (resp. curvature) of $\nabla_{\tilde{g}}^1$. Define local matrices $(a_j^i)$ and $(b_j^i)$ by

\begin{equation} \label{eq:4.6}
    \tilde{\theta}^i = a_j^i \theta^j, \quad \theta^j = b_i^j \tilde{\theta}^i.
\end{equation}
Therefore $a_j^i b_i^k = \delta_j^k$. 

First, differentiating \labelcref{eq:4.6} and applying the first structure equation, we obtain
\begin{equation*}
    - \tilde{\theta}^i_k \wedge \tilde{\theta}^k + \tilde{\Theta}^i = da_j^i \wedge \theta^j - a_j^i \theta_k^j \wedge \theta^k + a_j^i \Theta^j.
\end{equation*}
This is equivalent to
\begin{equation}
\label{eq:4.7}
    (b_k^j da_j^i - a_j^i b_k^l \theta_l^j + \tilde{\theta}^i_k)\wedge\widetilde{\theta}^k = \tilde{\Theta}^i - a_j^i \Theta^j.
\end{equation}
Taking the $(0,2)$ part of the equation,
\begin{equation}
\label{eq:4.8}
    \widetilde{N}_{\bar{j}\bar{k}}^i = \overline{b_j^r}\overline{b_k^s} a_t^i N_{\bar{r}\bar{s}}^t
\end{equation}
which shows that the $(0,2)$ part of the torsion is independent of the choice of the metric (cf. the proof of Lemma 2.3 in \cite{TosattiWY08}).

By the definition of the second canonical connection, the right-hand side of $\labelcref{eq:4.7}$ has no $(1,1)$ part. Hence there exist functions $a_{kl}^i$ with $a_{kl}^i = a_{lk}^i$ satisfying 
\begin{equation*}
    b_k^j da_j^i - a_j^i b_k^l \theta_l^j + \tilde{\theta}_k^i = a_{kl}^i \tilde{\theta}^l.
\end{equation*}
This equation can be rewritten as
\begin{equation} \label{eq:4.9}
    da_m^i - a_j^i \theta_m^j + a_m^k \tilde{\theta}^i_k = a_{kl}^i a_m^k \tilde{\theta}^l.
\end{equation}
We define the canonical Laplacian of a function $f$ on $M$ by
\begin{equation*}
    \Delta_g^1 f = \sum_i \left( \left( \nabla_g^1 \nabla_g^1 f \right) \left( e_i, \overline{e_i} \right) +\left( \nabla_g^1 \nabla_g^1 f \right) \left( \overline{e_i}, e_i \right) \right).
\end{equation*}
Define the function $u$ by
\begin{equation*}
    u = a_j^i\overline{a_j^i} = \frac{1}{2}\tr_g \tilde{g};
\end{equation*}
there is
\begin{equation*}
    b_i^j \overline{b_i^j} = \frac{1}{2} \tr_{\tilde{g}}g.
\end{equation*}

\begin{lem}[{\cite[Lemma~3.3]{TosattiWY08}}]
\label{lem:4}
    For $g$ and $\tilde{g}$ almost K\"ahler metrics and $a_j^i, a_{kl}^i, b_j^i$ as defined above, we have
    \begin{equation*}
        \frac{1}{2} \Delta_{\tilde{g}}^1 u = a_{kl}^i \overline{a_{pl}^i} a_j^k \overline{a_j^p} - \overline{a_j^i} a_j^k \widetilde{R}_{kl\bar{l}}^i + \overline{a_j^i} a_r^i b_l^q \overline{b_l^s} R_{jq\bar{s}}^r,
    \end{equation*}
    where the curvatures of the second canonical connection of $g$ and $\widetilde{g}$ are
    \begin{align*}
        (\Psi_i^j)^{(1,1)} &= R_{ik\bar{l}}^j\ \theta^k \wedge \overline{\theta^l}, \\
        (\widetilde{\Psi}_i^j)^{(1,1)} &= \widetilde{R}_{ik\bar{l}}^j\ \tilde{\theta}^k \wedge \overline{\tilde{\theta}^l}.
    \end{align*}
\end{lem}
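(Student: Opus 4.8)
The plan is to carry out a Bochner-type computation along the lines of Lemma~3.3 of \cite{TosattiWY08}; the only ingredients are the structure equation \labelcref{eq:4.9} and the fact that the second canonical connection of an almost K\"ahler metric has torsion of type $(0,2)$ only. The first step is to read \labelcref{eq:4.9} as the statement that the covariant derivative of the matrix $a=(a^i_m)$ of \labelcref{eq:4.6} --- taken with $\nabla^1_{\tilde g}$ on the upper index $i$ and $\nabla^1_g$ on the lower index $m$, and with covariant differentiation performed along the $\tilde g$-unitary coframe --- has vanishing $(0,1)$-part and $(1,0)$-part equal to $a^i_{kl}a^k_m$. In shorthand, $\nabla_l a^i_m = a^i_{kl}a^k_m$ and $\nabla_{\bar l}a^i_m=0$, with $a^i_{kl}$ symmetric in $k,l$ (this symmetry is where one uses that the $(2,0)$-part of the torsion difference in \labelcref{eq:4.7} vanishes). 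This ``pseudo-holomorphicity of $a$'' is the only property of $a$ the computation needs.

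Next I would apply the canonical Laplacian $\Delta^1_{\tilde g}$, that is $\sum_l(\nabla_l\nabla_{\bar l}+\nabla_{\bar l}\nabla_l)$, to $u=a^i_m\overline{a^i_m}=\frac{1}{2}\tr_g\tilde g$ and expand by the Leibniz rule. Because $\nabla_{\bar l}a^i_m=0$ (equivalently $\nabla_l\overline{a^i_m}=0$), most terms drop, and what remains falls into two types: the gradient contribution $\sum_l\abs{\nabla_l a^i_m}^2 = a^i_{kl}\overline{a^i_{pl}}a^k_j\overline{a^p_j}$, which is manifestly non-negative and is the first term of the asserted identity; and terms in which two covariant derivatives land on $a$ or on $\bar a$, i.e. expressions of the form $\nabla_{\bar l}\nabla_l a^i_m$ contracted against $\overline{a^i_m}$, together with their complex conjugates. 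Using $\nabla_l(\nabla_{\bar l}a^i_m)=0$ once more, each $\nabla_{\bar l}\nabla_l a^i_m$ equals the commutator $[\nabla_{\bar l},\nabla_l]a^i_m$.

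To finish, I would evaluate that commutator through the component form of the Ricci identity. The torsion correction there involves only the $(1,1)$-part of the torsion, which vanishes for the second canonical connection, so $[\nabla_l,\nabla_{\bar l}]a^i_m$ is pure curvature: the curvature of $\nabla^1_{\tilde g}$ acting on the upper slot $i$ produces the $\widetilde R^i_{kl\bar l}$-term (with the minus sign coming from the variance of that slot), while the curvature of $\nabla^1_g$ acting on the lower slot $m$ produces the $R^r_{jq\bar s}$-term. The matrices $b^q_l$ and $\overline{b^s_l}$ enter precisely because $R$ is naturally written in the $g$-coframe, with $(\Psi^r_j)^{(1,1)} = R^r_{jq\bar s}\theta^q\wedge\overline{\theta^s}$, whereas the derivatives above are taken along the $\tilde g$-frame vectors $\tilde e_l = b^q_l e_q$. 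Contracting with $\overline{a^i_m}$, summing over $l$, and using the Hermitian symmetry of the two curvature tensors (so that the resulting curvature expression coincides with its conjugate) then produces exactly the stated formula.

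The one genuine difficulty is bookkeeping: carrying the two different connections on the two index types of $a$ through a second covariant derivative, pinning down the signs in the Ricci identity for an upper versus a lower slot, and inserting the $b$'s that translate between the $g$- and $\tilde g$-unitary frames. Once the Ricci identity is correctly set up there is no further obstacle --- the content of the lemma is entirely the identity \labelcref{eq:4.9} together with the structure equations for the second canonical connection, and the rest is routine contraction.
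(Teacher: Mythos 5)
Your proposal is correct and is essentially the paper's argument: both start from \labelcref{eq:4.9} (read as saying that the mixed covariant derivative of $(a^i_j)$ has vanishing $(0,1)$-part and $(1,0)$-part $a^i_{kl}a^k_m$), apply the canonical Laplacian to $u=a^i_j\overline{a^i_j}$, and identify the second-derivative terms with the two curvature contributions via \labelcref{eq:4.10,eq:4.11,eq:4.12}. The only difference is presentational --- you phrase the step from \labelcref{eq:4.10} to \labelcref{eq:4.12} as a Ricci/commutator identity for the mixed connection, whereas the paper carries it out explicitly by taking the $(1,1)$-part of $d\partial u$ in the moving-frame formalism.
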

\begin{proof}
    By \cref{eq:4.9}, using the first and second structure equations, we have
    \begin{equation*}
        \begin{split}
        -a_j^i \Psi_m^j + a_{jl}^k a_m^j \tilde{\theta}^l \wedge \tilde{\theta}_k^i + a_m^k \widetilde{\Psi}_k^i =&
                a_m^k da_{kl}^j \wedge \tilde{\theta}^l - a_{kl}^i a_m^j \tilde{\theta}_j^k \wedge \tilde{\theta}^l + a_{kl}^i a_{jp}^k \tilde{\theta}^p \wedge \tilde{\theta}^l \\
                    &- a_{kl}^i a_m^k \tilde{\theta}_j^l \wedge \tilde{\theta}^j + a_{kl}^i a_m^k \tilde{\Theta}^l.
        \end{split}   
    \end{equation*}
    Multiplying by $b^m_r$ and rearranging, we obtain
    \begin{equation} \label{eq:4.10}
        \left( da_{rl}^i + a_{kl}^i a_{rj}^k \tilde{\theta}^j + a_{rl}^k \tilde{\theta}_k^i - a_{kl}^i \tilde{\theta}_r^k -         a_{rj}^i \tilde{\theta}_l^j \right) \wedge \tilde{\theta}^l = -b_r^m \Psi_m^j a_j^i + \widetilde{\Psi}_r^i - a_{rl}^i \tilde{\Theta}^l.
    \end{equation}
    Define $a_{rlp}^i$ and $a_{rl\bar{p}}^i$ by
    \begin{equation} \label{eq:4.11}
        da_{rl}^i + a_{kl}^i a_{rj}^k \tilde{\theta}^j + a_{rl}^k \tilde{\theta}_k^i - a_{kl}^i \tilde{\theta}_r^k -a_{rj}^i \tilde{\theta}_l^j = a_{rlp}^i \tilde{\theta}^p + a_{rl\bar{p}}^i \overline{\tilde{\theta}^p}.
    \end{equation}
    Then taking the $(1,1)$ part of \cref{eq:4.10}, we see that
    \begin{equation} \label{eq:4.12}
        a_{rl\bar{p}}^i \overline{\tilde{\theta}^p} \wedge \tilde{\theta}^l = \left(-\widetilde{R}_{rl\bar{p}}^i + a_j^i b_r^m b_l^q \overline{b_p^s} R_{mq\bar{s}}^j \right) \overline{\tilde{\theta}^p} \wedge \tilde{\theta}^l,
    \end{equation}
    where we recall the definition
    \begin{align*}
        (\Psi_i^j)^{(1,1)} &= R_{ik\bar{l}}^j\ \theta^k \wedge \overline{\theta^l}, \\
        (\widetilde{\Psi}_i^j)^{(1,1)} &= \widetilde{R}_{ik\bar{l}}^j\ \tilde{\theta}^k \wedge \overline{\tilde{\theta}^l}.
    \end{align*}
    Note that
    \begin{equation} \label{eq:4.13}
        du = \overline{a_j^i} da_j^i + a_j^i d\overline{a_j^i}.
    \end{equation}
    From \cref{eq:4.9}, we see that
    \begin{equation} \label{eq:4.14}
        \begin{split}
            du &= \overline{a_j^i} \left( a_{kl}^i a_j^k \tilde{\theta}^l + a_m^i \theta_j^m - a_j^k \tilde{\theta}_k^i \right) + a_j^i \left( \overline{a_{kl}^i a_j^k \tilde{\theta}^l} + \overline{a_m^i \theta_j^m} - \overline{a_j^k \tilde{\theta}_k^i} \right) \\
            &= \overline{a_j^i} a_{kl}^i a_j^k \tilde{\theta}^l + a_j^i \overline{a_{kl}^i a_j^k \tilde{\theta}^l}.
        \end{split}
    \end{equation}
    Hence $\partial u = \overline{a_j^i} a_{kl}^i a_j^k \tilde{\theta}^l$. Applying the exterior derivative to this and substituting from \cref{eq:4.9,,eq:4.11,,eq:4.12}, we have
    \begin{equation*}
        (d\partial u)^{(1,1)} = a_{kl}^i a_j^k \overline{a_{pq}^i a_j^p \tilde{\theta}^q} \wedge \tilde{\theta}^l - \overline{a_j^i} a_j^k \widetilde{R}_{kl\bar{p}}^i \overline{\tilde{\theta}^p} \wedge \tilde{\theta}^l + \overline{a_j^i} a_r^i b_l^q \overline{b_p^s} R_{jq\bar{s}}^r\overline{\tilde{\theta}^p} \wedge \tilde{\theta}^l.
    \end{equation*}
    Hence, from the definition of the canonical Laplacian \cite{TosattiWY08}, we prove the lemma.
\end{proof}

Now let $\nu := \det (a_i^j)$ and set $v := \abs{\nu}^2 = \nu \overline{\nu}$, which is the ratio of the volume forms of $\tilde{g}$ and $g$. It is easy to see that $v = \tilde{\omega}^n / \omega^n$, where $\tilde{\omega}(\cdot,\cdot) = \tilde{g}(\cdot,J\cdot)$ and $\omega(\cdot,\cdot) = g(\cdot,J\cdot)$. Now we have the following lemma.
\begin{lem}[{\cite[Lemma~3.4]{TosattiWY08}}]
\label{lem:5}
    For $g$ and $\tilde{g}$ almost K\"ahler metrics and $v$ as above, the following identites hold:
    \begin{enumerate}[label=(\arabic*)]
        \item 
        $(d\del \log v)^{(1,1)} = - R_{k\bar{l}}\ \theta^k \wedge \overline{\theta^l} + \widetilde{R}_{k\bar{l}} a_i^k \overline{a_j^l} \theta^i \wedge \overline{\theta^j}$;
        \item
        $\Delta_g^1 \log v = 2R - 2\widetilde{R}_{k\bar{l}} a_i^k \overline{a_i^l}$.
    \end{enumerate}
    where $R$ is the Hermitian scalar curvature, $R_{k\bar{l}}$ and $\widetilde{R}_{k\bar{l}}$ are the $(1,1)$ part of Hermite-Ricci curvature form with respect to the Hermitian canonical connections, that is, the second canonical connection of the metric $g$ and $\tilde{g}$ respectively.
\end{lem}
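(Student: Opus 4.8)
The plan is to adapt the argument of \cite[Lemma~3.4]{TosattiWY08} to the present situation, replacing the ``sum of eigenvalues'' quantity $u=\tfrac12\tr_g\tilde g$ of \cref{lem:4} by the ``sum of logarithms'' quantity $\log v=\log\nu+\log\overline\nu$, so that the relevant structure equations contract to \emph{linear} identities and the quadratic error terms cancel. First I would differentiate $\nu=\det(a_i^j)$, obtaining $d\log\nu=b_i^m\,da_m^i$, and substitute \labelcref{eq:4.9}; contracting with $b_i^m$ and using the inverse relations $a_j^ib_i^k=\delta_j^k$, $a_j^ib_k^j=\delta_k^i$ collapses the two connection terms to the traces $\theta_m^m$, $\tilde\theta_k^k$ of the respective connection matrices, leaving $d\log\nu=\theta_m^m-\tilde\theta_k^k+a_{il}^i\tilde\theta^l$. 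Adding the complex conjugate and using that the connection matrix of the second canonical connection in a unitary coframe is skew-Hermitian (so $\theta_m^m$ and $\tilde\theta_k^k$ are purely imaginary and cancel against their conjugates), I obtain $d\log v=a_{il}^i\tilde\theta^l+\overline{a_{il}^i}\,\overline{\tilde\theta^l}$, and hence $\partial\log v=a_{il}^i\tilde\theta^l$ since $\tilde\theta^l$ has type $(1,0)$.

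Next I would compute $(d\partial\log v)^{(1,1)}$. Setting $r=i$ in \labelcref{eq:4.11} and summing makes the terms $a_{il}^k\tilde\theta_k^i$ and $a_{kl}^i\tilde\theta_i^k$ cancel, which produces a formula for $da_{il}^i$; applying $d$ to $a_{il}^i\tilde\theta^l$ and using the first structure equation $d\tilde\theta^l=-\tilde\theta_k^l\wedge\tilde\theta^k+\tilde\Theta^l$ for $\tilde g$, the remaining connection-form contribution cancels against $-a_{il}^i\tilde\theta_k^l\wedge\tilde\theta^k$. Taking the $(1,1)$ part annihilates the $(2,0)$ terms (in particular the purely quadratic $a_{kl}^i a_{ij}^k$ term) and, since the torsion of the second canonical connection has no $(1,1)$ part, the $\tilde\Theta^l$ term as well, so $(d\partial\log v)^{(1,1)}=a_{il\bar p}^i\,\overline{\tilde\theta^p}\wedge\tilde\theta^l$. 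Setting $r=i$ in \labelcref{eq:4.12} and summing gives $\widetilde R_{il\bar p}^i=\widetilde R_{l\bar p}$, while the contraction $a_j^ib_i^m=\delta_j^m$ converts the $g$-curvature term into the Hermite--Ricci form $R_{q\bar s}$; passing back to the $\theta$-coframe via $\tilde\theta^l=a_m^l\theta^m$ and $b_l^qa_m^l=\delta_m^q$ then yields identity (1). Identity (2) follows from (1) by the same trace-extraction from the definition of the canonical Laplacian used at the end of the proof of \cref{lem:4}: $\tfrac12\Delta_g^1 f$ equals the sum over $l$ of the coefficient of $\overline{\theta^l}\wedge\theta^l$ in $(d\partial f)^{(1,1)}$, so $\tfrac12\Delta_g^1\log v=\sum_k R_{k\bar k}-\widetilde R_{k\bar l}a_i^k\overline{a_i^l}=R-\widetilde R_{k\bar l}a_i^k\overline{a_i^l}$, $R=\sum_k R_{k\bar k}$ being the Hermitian scalar curvature.

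The difficulty here is bookkeeping rather than analysis: one must keep the index conventions of \labelcref{eq:4.6} straight (which inverse relation to invoke at each contraction), carefully track the type decomposition of the connection one-forms together with the vanishing of the $(1,1)$-component of the torsion of the second canonical connection, and pin down the sign and normalization in the relation between $\Delta_g^1$ and the $(1,1)$-Hessian so that the factors of $2$ come out right. Once the algebra of \labelcref{eq:4.9,,eq:4.11,,eq:4.12} is in place there is no further obstacle.
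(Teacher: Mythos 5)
Your proof is correct and follows essentially the same route as the paper: both rest on contracting the structure equations \labelcref{eq:4.9,,eq:4.11,,eq:4.12} over $r=i$ and identifying the resulting Ricci traces. The only difference is organizational — you work with $\log v$ from the outset (via $d\log\nu=b_i^m\,da_m^i$), so the quadratic term lands in the $(2,0)$ component and drops out by type, whereas the paper first computes $(d\partial v)^{(1,1)}$ with its $-\nu_k\overline{\nu_l}\,\theta^k\wedge\overline{\theta^l}$ term and then cancels it using $d\partial\log v=\frac{d\partial v}{v}+\frac{\partial v\wedge\overline{\partial}v}{v^2}$; the two computations agree.
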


\begin{proof}
    By direct calculation, we have
    \begin{equation*}
        d\nu = \nu_j^i da_j^i,
    \end{equation*}
    where $\nu_j^i$ stands for the $(i,j)$-th cofactor of of the matrix $(a_i^j)$, such that $\nu_j^i = \nu b_j^i$.
    From \cref{eq:4.9}, we have
    \begin{equation*}
        da_m^i - a_j^i \theta_m^j + a_m^k \tilde{\theta}_k^i  = a_{kl}^i a_m^k a_r^l \theta^r.
    \end{equation*}
    Hence
    \begin{equation} \label{eq:4.15}
        \begin{split}
            d\nu &= \nu_j^i \left( a_{pq}^i a_i^p a_k^q \theta^k + a_k^j \theta_i^k - a_i^k \tilde{\theta}_k^j \right) \\
                 &= \nu_k \theta^k + \nu \left( \theta_i^i - \tilde{\theta}_i^i \right),
        \end{split}
    \end{equation}
    for $\nu_k = \nu_j^i a_{pq}^j a_i^p a_k^q$. Now
    \begin{equation*}
        \begin{split}
            dv &= \bar{\nu}d\nu + \nu d\bar{\nu} \\
               &= \bar{\nu} \left( \nu_k \theta^k + \nu (\theta_i^i - \tilde{\theta}_i^i) \right) + \nu \left( \overline{\nu_k} \overline{\theta^k} + \bar{\nu}(\overline{\theta_i^i} - \overline{\tilde{\theta}_i^i})\right) \\
               &= \bar{\nu} \nu_k \theta^k + \nu \overline{\nu_k}\overline{\theta^k}.
        \end{split}
    \end{equation*}
    Therefore $\partial v = \bar{\nu} \nu_k \theta^k$. Define $v_k$ and $v_{\bar{k}}$ by $dv = v_k \theta^k + v_{\bar{k}} \overline{\theta^k}$. It implies that $v_k = \bar{\nu} \nu_k$. Applying the exterior derivative to \cref{eq:4.15} and using the second structure equation, we have
    \begin{equation*}
        \begin{split}
            0 &= d \left(\nu_k \theta^k \right) + d\nu \wedge \left(\theta_i^i - \tilde{\theta}_i^i \right) + \nu d \left(\theta_i^i - \tilde{\theta}_i^i \right) \\
              &= d\left(\nu_k \theta^k \right) + \nu_k \theta^k \wedge \left(\theta_i^i - \tilde{\theta}_i^i \right) + \nu \left(\Psi_i^i - \widetilde{\Psi}_i^i \right).
        \end{split}
    \end{equation*}
    Multiplying by $\bar{\nu}$ and using \cref{eq:4.15} again, we have
    \begin{equation*}
        \begin{split}
            0 &= \bar{\nu} d \left(\nu_k \theta^k \right) + \nu_k \theta^k \wedge \left(\overline{\nu_l} \overline{\theta^l} - d\bar{\nu} \right) + v \left( \Psi_i^i - \widetilde{\Psi}_i^i \right) \\
              &= d \left( \bar{\nu} \nu_k \theta^k \right) + \nu_k \overline{\nu_l} \theta^k \wedge \overline{\theta^l} + v \left( \Psi_i^i - \widetilde{\Psi}_i^i \right).
        \end{split}
    \end{equation*}
    Consider the $(1,1)$ part
    \begin{equation} \label{eq:4.18}
        \begin{split}
            (d\del v)^{(1,1)} &= -\nu_k \overline{\nu_l} \theta^k \wedge \overline{\theta^l} - v\left( \Psi_i^i - \widetilde{\Psi}_i^i \right)^{(1,1)} \\
                &= -\frac{v_k \overline{v_l}}{v} \theta^k \wedge \overline{\theta^l} - vR_{k\bar{l}} \theta^k \wedge \overline{\theta^l} + v \widetilde{R}_{k\bar{l}} a_i^k \overline{a_j^l} \theta^i \wedge \overline{\theta^j}.
        \end{split}
    \end{equation}
    We also have
    \begin{equation*}
        d\del \log v = \frac{d\del v}{v} + \frac{\del v \wedge \overline{\del}v} {v^2},
    \end{equation*}
    which combines with \cref{eq:4.18} to give (1). The other one follows from the definition of the canonical Laplacian.
\end{proof}

Let $(M,J)$ be an almost complex surface with the almost K\"ahler metrics $g$ and $g_1$. By \Cref{lem:4,lem:5}, we have the key lemma that is similar to Lemma~3.2 in \cite{TosattiWY08}.
\begin{lem}
\label{lem:6}
    Let $g$ and $g_1$ be defined as above. Then
    \begin{equation*}
        \Delta_{g_1} \log u \geq \frac{1}{u}(\Delta_g f - 2R + 8N_{\bar{p}\bar{i}}^l \overline{N_{\bar{l}\bar{i}}^p} + 2\overline{a_i^p}a_j^p b_q^k \overline{b_q^l} \mc{R}_{i\bar{j}k\bar{l}}),
    \end{equation*}
    where $\mc{R}_{i\bar{j}k\bar{l}} = R_{ik\bar{l}}^j + 4N_{\bar{l}\bar{j}}^r \overline{N_{\bar{r}\bar{k}}^i}$.
\end{lem}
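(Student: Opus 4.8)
The plan is to combine \cref{lem:4,lem:5} with the first Bianchi identity for the second canonical (Chern) connection of $g_1$, following the strategy of \cite[Lemma~3.2]{TosattiWY08}. Set $f:=\log(\omega_1^2/\omega^2)$, so that $v=e^f$ and $\log v=f$ in the notation of \cref{lem:5}; recall also that on an almost K\"ahler manifold the canonical Laplacian and the Laplacian of the Levi-Civita connection agree on functions (the Lee form vanishes because $d\omega=d\omega_1=0$), so that $\Delta^1_{g_1}=\Delta_{g_1}$ and $\Delta^1_g=\Delta_g$ on $C^\infty(M)$. Since $u>0$, one has the pointwise identity
\begin{equation*}
\Delta^1_{g_1}\log u=\frac{1}{u}\,\Delta^1_{g_1}u-\frac{2}{u^2}\,\lvert\partial u\rvert_{g_1}^2 ,
\end{equation*}
and, by the computation carried out in the proof of \cref{lem:4}, $\partial u=\overline{a_j^i}\,a_{kl}^i\,a_j^k\,\tilde\theta^l$, so that $\lvert\partial u\rvert_{g_1}^2=\sum_l\lvert\overline{a_j^i}a_{kl}^i a_j^k\rvert^2$ in the $g_1$-unitary coframe. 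Writing $\sum_j a_j^k\overline{a_j^p}$ as a positive semidefinite matrix of trace $u=\tfrac12\tr_g g_1$ and applying Cauchy--Schwarz yields $\lvert\partial u\rvert_{g_1}^2\le u\cdot a_{kl}^i\overline{a_{pl}^i}a_j^k\overline{a_j^p}$. Plugging in the formula for $\tfrac12\Delta^1_{g_1}u$ from \cref{lem:4}, its first term is absorbed and we are left with
\begin{equation*}
\Delta_{g_1}\log u\ \ge\ \frac{2}{u}\Bigl(\overline{a_j^i}a_r^i b_l^q\overline{b_l^s}R_{jq\bar s}^r\ -\ \overline{a_j^i}a_j^k\,\widetilde R^i_{kl\bar l}\Bigr).
\end{equation*}

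It then remains to identify the two curvature terms. The first one, after relabelling indices, equals $\overline{a_i^p}a_j^p b_q^k\overline{b_q^l}R_{ik\bar l}^j$, which is the ``$R^j_{ik\bar l}$-part'' of the modified curvature $\mc{R}_{i\bar jk\bar l}$. For the second one I would invoke the first Bianchi identity for the Chern connection of the almost K\"ahler metric $g_1$: it expresses the contraction $\sum_l\widetilde R^i_{kl\bar l}$ in terms of the Hermite--Ricci form $\widetilde R_{k\bar i}$ of $g_1$, up to terms quadratic in the $(0,2)$-torsion $\widetilde N$ of $g_1$ (on a K\"ahler manifold the two coincide). Using \labelcref{eq:4.8}, $\widetilde N^i_{\bar j\bar k}=\overline{b_j^r}\,\overline{b_k^s}\,a_t^i N^t_{\bar r\bar s}$, to rewrite $\widetilde N$ through the Nijenhuis tensor $N$ of $g$, the torsion contributions collapse (after the contraction with $\overline{a_j^i}a_j^k$) to the intrinsic scalar $8N^l_{\bar p\bar i}\overline{N^p_{\bar l\bar i}}$ together with the mixed term $8\,\overline{a_i^p}a_j^p b_q^k\overline{b_q^l}\,N^r_{\bar l\bar j}\overline{N^i_{\bar r\bar k}}$, the latter being exactly what completes $R^j_{ik\bar l}$ to $\mc{R}_{i\bar jk\bar l}=R^j_{ik\bar l}+4N^r_{\bar l\bar j}\overline{N^i_{\bar r\bar k}}$. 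Meanwhile the Hermite--Ricci piece satisfies $\overline{a_j^i}a_j^k\widetilde R_{k\bar i}=\widetilde R_{k\bar l}a_i^k\overline{a_i^l}$, which by \cref{lem:5}\,(2) equals $R-\tfrac12\Delta_g f$. Substituting all of this into the displayed lower bound and pulling out the common factor $1/u$ gives $\Delta_{g_1}\log u\ge\frac{1}{u}\bigl(\Delta_g f-2R+8N^l_{\bar p\bar i}\overline{N^p_{\bar l\bar i}}+2\,\overline{a_i^p}a_j^p b_q^k\overline{b_q^l}\,\mc{R}_{i\bar jk\bar l}\bigr)$, as claimed.

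The main obstacle is this last reduction: establishing the first Bianchi identity for the Chern connection of $g_1$ with the precise quadratic-Nijenhuis corrections, checking that the first-covariant-derivative-of-$N$ terms it generates cancel in this particular contraction, and transferring the torsion terms correctly between the $g_1$- and $g$-unitary coframes via \labelcref{eq:4.8}. All the remaining manipulations --- the passage to $\log u$, the Cauchy--Schwarz absorption of the gradient term, and the index relabellings --- are the same routine linear algebra as in \cite[Lemma~3.2]{TosattiWY08}.
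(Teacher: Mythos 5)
Your proposal is correct and follows essentially the same route as the paper: \cref{lem:4} for $\Delta_{g_1}u$, Cauchy--Schwarz to absorb the gradient term of $\log u$, the transformation rule \labelcref{eq:4.8} for the $(0,2)$-torsion, and \cref{lem:5}(2) together with $\log v=f$. The one step you flag as the ``main obstacle''---relating $\sum_l\widetilde R^i_{kl\bar l}$ to the Hermite--Ricci form up to terms quadratic in $\widetilde N$---is exactly the identity \labelcref{eq:4.16}, quoted from \cite[(2.21)]{TosattiWY08}, so the paper simply cites it rather than re-deriving it.
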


\begin{proof}
    Let $\tilde{g}=g_1$, by applying \cref{lem:4},
    \begin{equation*}
        \frac{1}{2} \Delta_{g_1} u = a_{kl}^i \overline{a_{pl}^i} a_j^k \overline{a_j^p} - \overline{a_j^i} a_j^k \widetilde{R}_{kl\bar{l}}^i + \overline{a_j^i} a_r^i b_l^q \overline{b_l^s} R_{jq\bar{s}}^r,
    \end{equation*}
    where $a_{kl}^i, a_j^k, \widetilde{R}_{kl\bar{i}}^i,$ and $R_{jq\bar{s}}^r$ with respect to $g$ and $\tilde{g}$.
    Using the same calculation as in the proof of \cref{lem:4} and \cref{lem:5} (cf. \cite[Lemma~3.3, Lemma~3.4]{TosattiWY08}), one has
    \begin{equation*}
        \Delta_g \log v = 2R - 2\widetilde{R}_{k\bar{l}} {a}_i^k \overline{{a}_i^l}.
    \end{equation*}
    Recall the following equation \cite[(2.21)]{TosattiWY08}
    \begin{equation}
    \label{eq:4.16}
        R_{k\bar{l}} = R_{ik\bar{l}}^i = R_{ki\bar{i}}^l + 4N_{\bar{p}\bar{l}}^i \overline{N_{\bar{i}\bar{k}}^p} + 4N_{\bar{i}\bar{l}}^p \overline{N_{\bar{p}\bar{i}}^k}, 
    \end{equation}
    Notice that for almost K\"ahler metrics, the Laplacian with respect to the Levi-Civita connection is same as the complex Laplacian \cite{TosattiWY08}.
    Combining \cref{lem:4,lem:5} with \labelcref{eq:4.16}, one gets
    \begin{equation*}
        \Delta_{g_1} u = 2a_{kl}^i \overline{a_{pl}^i} a_j^k \overline{a_j^p} + 2\overline{a_j^i} a_r^i b_l^q \overline{b_l^s} R_{jq\bar s}^r + \Delta_g \log v - 2R + 8 \overline{a_j^i} a_j^k (\widetilde{N}_{\bar{p}\bar{i}}^l \overline{\widetilde{N}_{\bar{l}\bar{k}}^p} +  \widetilde{N}_{\bar{l}\bar{i}}^p \overline{\widetilde{N}_{\bar{p}\bar{l}}^k}).
    \end{equation*}
    Using \labelcref{eq:4.8}, we have
    \begin{equation*}
        \overline{a_j^i} a_j^k (\widetilde{N}_{\bar{p}\bar{i}}^l \overline{\widetilde{N}_{\bar{l}\bar{k}}^p} + \widetilde{N}_{\bar{l}\bar{i}}^p \overline{\widetilde{N}_{\bar{p}\bar{l}}^k}) = N_{\bar{p}\bar{i}}^l \overline{N_{\bar{l}\bar{i}}^p} + \overline{a_s^k} a_j^k \overline{b_l^t} b_l^r N_{\bar{t}\bar{j}}^p \overline{N_{\bar{p}\bar{r}}^s}
    \end{equation*}
    Hence
    \begin{equation}
    \label{eq:4.17}
        \Delta_{g_1} u = 2a_{kl}^i \overline{a_{pl}^i} a_j^k \overline{a_j^p} + \Delta_g \log v - 2R + 8N_{\bar{p}\bar{i}}^l \overline{N_{\bar{l}\bar{i}}^p} + 2\overline{a_i^p} a_j^p b_q^k \overline{b_q^l} \mc{R}_{i\bar jk\bar{l}}.
    \end{equation}
    By \labelcref{eq:4.17},
    \begin{align*}
        \Delta_{g_1} \log u &= \frac{1}{u} (\Delta_{g_{1}} u - \abs{du}_{g_{1}}^2 / u) \\
        &= \frac{1}{u} (2a_{kl}^i \overline{a_{pl}^i} a_j^k \overline{a_j^p} + 8N_{\bar{p}\bar{i}}^l \overline{N_{\bar{l}\bar{i}}^p} + 2\overline{a_i^p} a_j^p b_q^k \overline{b_q^l} \mc{R}_{i\bar{j}k\bar{l}} + \Delta_g \log v - 2R - \abs{du}_{g_{1}}^2 / u).
    \end{align*}
    From (3.14) in \cite{TosattiWY08}, we have
    \begin{equation*}
        \abs{du}_{g_1}^2 = 2u_l \overline{u_l},
    \end{equation*}
    where $u_l = a_j^i a_{kl}^i a_j^k = \overline{a_j^i} B_{kj}^i$ and $B_{lj}^i = a_{kl}^i a_j^k$. Then the Cauchy-Schwarz inequality implies \cite{TosattiWY08}
    \begin{equation*}
        u_l \overline{u_l} \leq u a_{kl}^i \overline{a_{pl}^i} a_j^k \overline{a_j^p},
    \end{equation*}
    It follows that
    \begin{equation}
        \abs{du}_{g_{\frac12}}^2 \leq 2u a_{kl}^i \overline{a_{pl}^i} a_j^k \overline{a_j^p}.
    \end{equation}
    Moreover, using $v = \omega_{1}^2 / \omega^2$, we find that
    \begin{equation*}
        \log v = f.
    \end{equation*}
    Therefore
    \begin{align*}
        \Delta_{g_1} \log u &= \frac{1}{u} ( 8N_{\bar{p}\bar{i}}^l \overline{N_{\bar{l}\bar{i}}^p} + 2\overline{a_i^p} a_j^p b_q^k \overline{b_q^l} \mc{R}_{i\bar{j}k\bar{l}} + \Delta_g \log v - 2R + (2a_{kl}^i \overline{a_{pl}^i} a_j^k \overline{a_j^p}-\abs{du}_{g_{\frac12}}^2 / u))\\
        &\geq \frac{1}{u} ( 8N_{\bar{p}\bar{i}}^l \overline{N_{\bar{l}\bar{i}}^p} + 2\overline{a_i^p} a_j^p b_q^k \overline{b_q^l} \mc{R}_{i\bar{j}k\bar{l}} + \Delta_g f - 2R).
    \end{align*}
    This completes the proof of \cref{lem:6}.
\end{proof}

Now we are ready to prove \cref{prop:7} by \cref{lem:6}.
\begin{proof}[Proof of \cref{prop:7}]
    By Calabi-Yau equation and the arithmetic geometric means inequality, $u$ is bounded below away from zero by a positive constant depending only on $\inf_M f$. Hence there exists a constant $C'$ depending only on $M, \omega, J, \inf_M f, \Delta_g f$, and $R$ such that
    
    \begin{equation}
    \label{eq:4.19}
        \abs{\frac{1}{u} (\Delta_g f - R + 4N_{\bar{p}\bar{i}}^l \overline{N_{\bar{l}\bar{i}}^p})} \leq C'.
    \end{equation}
    Choosing $A'$ sufficiently large such that
    \begin{equation*}
        \mc{R}_{i\bar{j}k\bar{l}} + A' \delta_{ij} \delta_{kl} \geq 0.
    \end{equation*}
    Then
    \begin{equation}
    \label{eq:4.20}
        2 \overline{a_i^p} a_j^p b_q^k \overline{b_q^l}\mc{R}_{i\bar{j}k\bar{l}} \geq - 2A' \overline{a_i^p} a_i^p b_q^k \overline{b_q^k} = -A' \tr_{g_{1}} g.
    \end{equation}
    Combining \labelcref{eq:4.19,eq:4.20} with \cref{lem:6}, we obtain
    \begin{equation*}
        \Delta_{g_{1}} \log u \geq -C' - A' \tr_{g_{1}} g.
    \end{equation*}
    We apply the maximum principle to the above inequality. Suppose that the maximum of $u$ is achieved at point $x_0$:
    \begin{equation*}
         C'' \geq \Delta_{g_{1}} (\log u - 2A' \varphi_{1})(x_0) \geq (-C' + A'\tr_{g_{1}} g - 4A')(x_0).
    \end{equation*}
    since $\Delta_{g_{1}} \varphi_{1} = 4 - \tr_{g_{1}} g$. Hence
    \begin{equation*}
        (\tr_{g_{1}} g)(x_0) \leq \frac{4A'+ C'}{A'}.
    \end{equation*}
    Note that at $x_0$,
    \begin{equation*}
        g_{1}(x_0) = (a_1+1) |\theta^1|^2 + (a_2+1) |\theta^2|^2,\ 0 < a_1 \leq a_2.
    \end{equation*}
    Using the equation
    \begin{equation*}
        \frac{\frac12 (a_1+1) + \frac12 (a_2+1)}{[\frac12 (a_1+1)]\cdot[\frac12 (a_2+1)]} = \frac{1}{\frac12 (a_1+1)} + \frac{1}{\frac12 (a_2+1)},
    \end{equation*}
    we see that
    \begin{equation*} 
        \frac{\tr_g g_{1} }{2} \sqrt{\frac{\det g}{\det g_{1}}} = \frac{1}{2} (\frac{\tr_{g_{1}} g}{2}).
    \end{equation*}
    Hence, using \cref{eq:1.2} again, $u(x_0)$ can be bounded from above in terms of $\tr_{g_{1}} g$ and $\sup_M f$.

    It follows that for any $x \in M$,
    \begin{equation*}
        \log u(x) - 2 A'\varphi_{1}(x) \leq \log C'' - 2A' \inf_M \varphi_{1}.
    \end{equation*}
    After exponentiation and applying \cref{prop:6}, this proves \cref{prop:7}.
\end{proof}

As in the K\"ahler case \cite{Aubin98,Yau78}, we can provide an estimate for the first derivative of $g_1$, which is regarded as the generalized third-order estimate for the almost K\"ahler potential $\varphi$. For Hermitian or almost Hermitian cases, see Tossatti-Wang-Weinkove-Yang \cite{TWWY15}, Tossati-Weinkove \cite{TosattiW10}, Chu-Tossatti-Weinkove \cite{ChuTW19}.

Now we have the same result as Theorem 4.1 in \cite{TosattiWY08}.
\begin{prop}
\label{prop:9}
    Let $g_1$ be a solution of \labelcref{eq:1.2}, then
    \begin{equation*}
        \sup_M (\tr_g g_1) \leq C(M,\omega,J,f).
    \end{equation*}
    Thus there exists a constant $C_0$ depending on $M, \omega, J, f$ such that
    \begin{equation*}
        \abs{\nabla_g g_1}_{g_1} \leq C_0,
    \end{equation*}
    where $\nabla_g$ is the Hermitian canonical connection associated to $g$ and $J$.
\end{prop}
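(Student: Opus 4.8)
The plan is to carry out the Calabi--Yau third-order (``$C^3$'') estimate in the almost K\"ahler form of Tosatti--Weinkove--Yau \cite[Theorem~4.1]{TosattiWY08}, using the curvature identities \cref{lem:4,lem:5,lem:6} together with the $C^0$ and $C^2$ bounds already obtained. The first assertion, $\sup_M(\tr_g g_1)\le C(M,\omega,J,f)$, is exactly the conclusion of \cref{prop:7}; combined with \cref{prop:8} it also gives uniform ellipticity $C^{-1}g\le g_1\le Cg$, hence $\tr_{g_1}g\le C(M,\omega,J,f)$. For the gradient bound, introduce the third-order quantity
\[
    S:=a^i_{kl}\,\overline{a^i_{pl}}\,a^k_j\,\overline{a^p_j},
\]
the term already appearing on the right-hand side of \labelcref{eq:4.17} (recall that the $a^i_{kl}$ of \labelcref{eq:4.9} measure the difference between the second canonical connections of $g$ and $g_1$); it equals $\sum_{i,j,l}\abs{a^i_{kl}a^k_j}^2$, hence is nonnegative, and by the uniform ellipticity it is comparable to $\abs{\nabla_g g_1}_{g_1}^2$. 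It therefore suffices to prove $\sup_M S\le C(M,\omega,J,f)$.

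The heart of the argument is a Bochner-type inequality for $S$. Differentiating the structure relations \labelcref{eq:4.9,eq:4.11,eq:4.12} once more, applying the first and second structure equations and the Bianchi identity, and contracting --- exactly as in the proofs of \cref{lem:4,lem:5}, and using that for almost K\"ahler metrics the Laplacian $\Delta_{g_1}$ agrees with the canonical one --- one obtains a term quadratic in the canonical derivative of the third-order jet with good sign, a term bounded below by $-CS$ in which the constant absorbs the curvature $\Psi$ of $g$, the Nijenhuis tensor $N$ and $\nabla^1_g N$, and the ellipticity constants, and a term handled by differentiating the Monge--Amp\`ere equation, in the form $\log v=f$, one step beyond \cref{lem:5}, which trades the otherwise-uncontrolled third derivatives of $g_1$ for derivatives of $f$. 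Discarding the good term, this gives
\[
    \Delta_{g_1}S\ \ge\ -C\bigl(1+S\bigr),\qquad C=C(M,\omega,J,f).
\]

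To close the estimate one uses the positivity hidden in the second-order computation behind \cref{prop:7}: expanding $\Delta_{g_1}(\tr_g g_1)$ in a $g$-unitary frame and substituting the twice-differentiated equation produces a full first-derivative-of-$g_1$ term with good sign, so that after bounding the remaining curvature-times-metric and Nijenhuis terms by the $C^2$ bounds and the fixed background data,
\[
    \Delta_{g_1}(\tr_g g_1)\ \ge\ \varepsilon_0\,S\ -\ C,\qquad \varepsilon_0=\varepsilon_0(M,\omega,J,f)>0.
\]
Now combine the two inequalities by the maximum principle, as in the proof of \cref{prop:7}: taking $W:=S+A\,\tr_g g_1$ with $A$ so large that $A\varepsilon_0-C\ge1$ gives $\Delta_{g_1}W\ge S-C$, so at a point $x_0$ where $W$ attains its maximum one has $0\ge\Delta_{g_1}W(x_0)\ge S(x_0)-C$, whence $S(x_0)\le C$ and therefore $S\le W(x_0)\le C+A\sup_M(\tr_g g_1)\le C(M,\omega,J,f)$ on all of $M$. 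Hence $\abs{\nabla_g g_1}_{g_1}=S^{1/2}\le C_0(M,\omega,J,f)$, which is the second assertion.

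I expect the main obstacle to be the Bochner inequality for $S$: the contraction is long, and one must arrange that every occurrence of the third-order jet other than $S$ itself and the discarded good term is cancelled, absorbed using $\tr_g g_1,\tr_{g_1}g\le C$ after invoking the differentiated Monge--Amp\`ere equation, or bounded in terms of $\norm{f}_{C^3}$ and the fixed data $\Psi, N, \nabla^1_g N$; in particular one has to keep track of the many Nijenhuis corrections --- absent in the K\"ahler case --- and verify that each is controlled by $(M,J,g)$ alone. Everything downstream of this inequality is the maximum-principle pattern already used in \cref{prop:7}.
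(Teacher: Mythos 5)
Your proposal is correct and follows essentially the same route as the paper: both reduce to the two differential inequalities $\Delta_{g_1}S\ge -C(1+S)$ and $\Delta_{g_1}(\tr_g g_1)\ge \varepsilon_0 S-C$ (imported from the Tosatti--Weinkove--Yau third-order computation, Lemmas 4.2--4.5 and Theorem 4.1 there), and then apply the maximum principle to $S+A\,\tr_g g_1$. The only cosmetic difference is that the paper takes $S=a^i_{kl}\overline{a^i_{kl}}=\tfrac14\abs{\nabla_g g_1}_{g_1}^2$ and evaluates at the maximum of $S$ rather than of the combined quantity, while you use the quartic contraction $a^i_{kl}\overline{a^i_{pl}}a^k_j\overline{a^p_j}$, which is comparable by the uniform ellipticity already established.
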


\begin{proof}
   Instead of proving the boundedness of $\abs{\nabla_g g_1}_{g_1}$, we show that $S:=\frac14\abs{\nabla_g g_1}_{g_1}^2$ is bounded. Let the $\tilde{g}$ above \cref{eq:4.6} be $g_1$ here. Denote $\tilde{\theta}^i$, $\widetilde{R}^i_{jk\bar{i}}$, $\widetilde{N}^p_{\bar{q}\bar{i}}$, and $\widetilde{R}_{k\bar{i}}$ by the local unitary coframe, curvature tensor, Nijenhuis tensor, and Ricci tensor of $\tilde{g}$ respectively. Moreover, the local matrices $({a}^i_j)$ and $({b}^j_i)$ are given by
    \[
    \tilde{\theta}^i={a}^i_j\theta^j,\quad \theta^j=b^j_i\tilde{\theta}^i.
    \]
    Because 
    \[
    \sup(\tr_gg_1)\leq C(M,\omega,J,f),
    \]
    as argued in \cref{eq:boundmet}, $({a}^i_j)$ and $(b^j_i)$ are bounded.
    
    By applying the same calculations in Tosatti-Weinkove-Yau \cite{TosattiWY08} (Lemma 4.2, 4.3, and 4.4), the following equations are true:
    \begin{equation*}
    \left\{
        \begin{aligned}
            S=&{a}^i_{kl}\overline{{a}^i_{kl}},\\
        \frac12\Delta_{\widetilde{g}}S=&|{a}^i_{klp}-{a}^i_{rl}{a}^r_{kp}|_{{\tilde{g}}}^2+|{a}^i_{kl\bar{p}}|_{{\tilde{g}}}^2+\overline{{a}^i_{kl}}{a}^i_{rl}\widetilde{R}^r_{kp\bar{p}}+\overline{{a}^i_{kl}}{a}^i_{kj}\widetilde{R}^j_{lp\bar{p}}-\overline{a^i_{kl}}{a}^r_{kl}\widetilde{R}^i_{rp\bar{p}}\\ 
        &+2\mathrm{Re}(\overline{{a}^i_{kl}}({b}^m_k{b}^q_l\overline{{b}^s_p}R^j_{mq\bar{s}}{a}^i_{rp}{a}^r_j-{a}^i_j{b}^q_l\overline{{b}^s_p}R^j_{mq\bar{s}}{a}^r_{kp}{b}^m_r-{a}^i_j{b}^m_k\overline{{b}^s_p}R^j_{mq\bar{s}}{a}^r_{lp}{b}^q_r\\ &+{a}^i_j{b}^m_k{b}^q_l\overline{{b}^s_p}{b}^u_pR^j_{mq\bar{s},u}-\widetilde{R}_{k\bar{i},l}+4\widetilde{N}^p_{\bar{q}\bar{i},l}\overline{\widetilde{N}^q_{\bar{p}\bar{k}}}+4\widetilde{N}^p_{\bar{q}\bar{i}}\overline{\widetilde{N}^q_{\bar{p}\bar{k},\bar{l}}}+4\widetilde{N}^p_{\bar{q}\bar{i},l}\overline{\widetilde{N}^k_{\bar{p}\bar{q}}}\\
        &+4\widetilde{N}^p_{\bar{q}\bar{i},l}\overline{\widetilde{N}^k_{\bar{p}\bar{q},\bar{l}}}+4\widetilde{N}^i_{\bar{p}\bar{q},k}\overline{\widetilde{N}^q_{\bar{p}\bar{l}}}+2\overline{\widetilde{N}^k_{\bar{l}\bar{p},ip}})),\\
        \widetilde{N}^i_{\bar{j}\bar{k},m}=&\overline{{b}^r_j{b}^s_k}{b}^l_ma^i_tN^t_{\bar{r}\bar{s},l}+\overline{{b}^r_j{b}^s_k}{a}^l_tN^t_{\bar{r}\bar{s}}{a}^i_{lm},\\
        \widetilde{N}^i_{\bar{j}\bar{k},\bar{m}}=&\overline{{b}^r_j{b}^s_k{b}^l_m}{a}^i_tN^t_{\bar{r}\bar{s},\bar{l}}-\overline{{b}^r_j{b}^s_k}{a}^i_tN^t_{\bar{r}\bar{s}}\overline{{a}^l_{jm}}-\overline{{b}^r_j{b}^s_l}{a}^i_tN^t_{\bar{r}\bar{s}}\overline{{a}^l_{km}},\\
        |{a}^i_{kl}\widetilde{N}^k_{\bar{l}\bar{p},ip}|_{{\tilde{g}}}\leq& C(S+1)+\frac12|{a}^i_{klp}-{a}^i_{rl}{a}^r_{kp}|^2_{{\tilde{g}}},\\        
        \Delta_{\tilde{g}} u=&2a_{kl}^i \overline{a_{pl}^i} a_j^k \overline{a_j^p} + \Delta_g f - 2R + 8N_{\bar{p}\bar{i}}^l \overline{N_{\bar{l}\bar{i}}^p} + 2\overline{a_i^p} a_j^p b_q^k \overline{b_q^l} \mc{R}_{i\bar jk\bar{l}},
        \end{aligned}
        \right..
    \end{equation*}
    where ${a}^i_{kl}$ is defined by $d{a}^i_m-{a}^i_j\theta^j_m+{a}^k_m\tilde{\theta}^i_k={a}^i_{kl}{a}^k_m\tilde{\theta}^l$ and $u=\frac{1}{2}\tr_g\tilde{g}$. According to the definition of $a^i_{kl}$, $|a^i_{kl}|$ is bounded. Therefore
    \[
    |\Delta_{\widetilde{g}}u|\leq C(M,\omega,J,f).
    \]

    Denote the Laplacian operator of $\tilde{g} = g_1$ as $\Delta_{g_1}$. The same argument in the proof of Lemma 4.5 from \cite{TosattiWY08} gives ($\log v$ and $F$ in \cite{TosattiWY08} correspond to $|\det({a}^i_j)|^2$ and $f$ here respectively)
    \[
    \Delta_{g_1}S\geq-CS-C,
    \]
    for some positive constant $C$. Moreover the proof of Theorem 4.1 from \cite{TosattiWY08} shows
    \[
    \Delta_{g_1}u\geq CS-C,
    \]
    for some positive constant $C$. The above two inequalities yield
    \[
    \Delta_{g_1}(S+C'u)\geq S-C,
    \]
    for some large enough $C'$. Let $x$ be the point where $S|_x=\max S$, so $\Delta_{g_1}S|_x\leq0$. Combining this with the fact that $\Delta_{g_1}u$ is bounded and evaluating the above inequality at $x$, one has
    \[
    C''\geq\Delta_{g_1}(S+C'u)|_x\geq\max S-C,
    \]
    for some large enough constant $C''$. This proves the boundedness of $S$ and the proposition follows.    
\end{proof}

Because of \cref{prop:6}, $\varphi_1$ is uniformly bounded depending only on $\omega,J,f$. Now the same result holds as Theorem~1.3 in \cite{TosattiWY08} on closed almost K\"ahler surface, but not requiring Tian's $\alpha$-integral \cite{Tian87}
\begin{equation*}
    I_{\alpha}(\varphi') := \int_M e^{-\alpha \varphi'} \omega^2,
\end{equation*}
where $\varphi'$ is defined by
\begin{equation*}
    \frac14 \Delta_{g_1} \varphi' = 1 - \frac{\omega_1 \wedge \omega}{\omega^2}, \quad \sup_M \varphi' = 0
\end{equation*}
\begin{thm}
\label{thm:3}
    Let $(M,\omega,g,J)$ be a closed almost K\"ahler surface. If $(\omega_1, J)$ is an almost K\"ahler structure with $[\omega_1] = [\omega]$ and solving the Calabi-Yau equation
    \begin{equation*}
        \omega_1^2 = e^f \omega^2.
    \end{equation*}
    There are $C^{\infty}$ $a\ priori$ bounds on $\omega_1$ depending only on $M,\omega,J$ and $f$.
\end{thm}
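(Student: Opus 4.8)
The strategy is to chain together the estimates already established in this section and finish with a Schauder bootstrap. First I would record that, since $\omega_1$ and $\omega$ are both $J$-compatible symplectic forms with $[\omega_1]=[\omega]$, their difference lies in $\Omega_J^+\cap d(\Omega_\R^1)$; as at the beginning of this section, \cref{prop:3} then produces an almost Kähler potential $\varphi\in C^\infty(M)_0$, unique up to a constant, with $\omega_1-\omega=\mc D_J^+(\varphi)=d\mc W_J(\varphi)$ and $d^{\hodge}\mc W_J(\varphi)=0$, so that $\omega_1=\omega+du$ for $u=\mc W_J(\varphi)\in A_+$ and the Calabi--Yau equation \labelcref{eq:3.3} is the one at hand. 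The potentials $\varphi_1$ and $\varphi_s$ of \labelcref{eq:4.3} are then defined as well.

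The zeroth order bound is \cref{prop:6}, which gives $\norm{\varphi_1}_{C^0(g)}\le C(M,\omega,J,f)$ directly; together with \cref{lem:1} it also controls $\Delta_g\varphi$ and $\Delta_g\varphi_1$, and rerunning the argument of \cref{prop:6} yields $\norm{\varphi}_{C^0(g)}\le C(M,\omega,J,f)$. The second order bound is \cref{prop:7}: $\tr_g g_1\le C\,e^{A(\varphi_1-\inf_M\varphi_1)}\le C(M,\omega,J,f)$, and since the equation forces $\det g_1=e^f\det g$, hence $\det g_1$ bounded from above and below, one obtains the two-sided bound $c\,g\le g_1\le C\,g$ of \labelcref{eq:boundmet}; equivalently the system $\mc F(u)=e^f$ is uniformly elliptic with ellipticity constants depending only on $M,\omega,J,f$. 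The third order (Calabi) bound is \cref{prop:9}: $\abs{\nabla_g g_1}_{g_1}\le C_0(M,\omega,J,f)$, which combined with the previous step gives $\abs{\nabla_g g_1}_g\le C$, so $g_1$ — equivalently $\nabla_g u$, as $du=\omega_1-\omega$ — is uniformly Lipschitz and hence uniformly bounded in $C^{0,\alpha}(g)$ for each $\alpha\in(0,1)$.

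To reach $C^\infty$ I would now bootstrap. Differentiating $\mc F(u)=e^f$ once, the first derivatives of $u$ satisfy a linear elliptic system whose coefficients are algebraic expressions in $\omega_1$ (hence in $C^{0,\alpha}(g)$ with uniform bounds) and whose right-hand side is a first derivative of $e^f$ (hence in $C^{0,\alpha}(g)$ since $f\in C^\infty$). The global Schauder estimate \cite[Theorem~6.6]{GilbargTrudinger77} upgrades $u$ to $C^{2,\alpha}(g)$, hence $g_1$ to $C^{1,\alpha}(g)$, with bounds depending only on $M,\omega,J,\norm{f}_{C^{1,\alpha}(g)}$; feeding this back in and iterating gives, for every integer $k\ge0$,
\begin{equation*}
    \norm{\varphi}_{C^{k+2,\alpha}(g)},\ \norm{\varphi_1}_{C^{k+2,\alpha}(g)}\le C\bigl(M,\omega,J,\norm{f}_{C^{k,\alpha}(g)}\bigr).
\end{equation*}
These are precisely the asserted $C^\infty$ a priori bounds on $\omega_1=\omega+\mc D_J^+(\varphi)$. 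In contrast with \cite{TosattiWY08}, no control of Tian's $\alpha$-integral $I_\alpha(\varphi')$ is required, since \cref{prop:6} already supplies a uniform $C^0$ bound.

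The genuinely hard inputs are \cref{prop:7} and \cref{prop:9}, resting on the curvature Laplacian inequality \cref{lem:6} and the almost Kähler analogues of the Tosatti--Weinkove--Yau computations; these having been proved above, the remainder is bookkeeping. The two points to check carefully are: that the potential representation of the first step, and the normalizations $\varphi\in C^\infty(M)_0$ and $\sup_M\varphi=0$ implicit in \cref{prop:6}, line up so that the three estimates genuinely apply to $\varphi$ and $\varphi_1$; and that differentiating the almost Kähler Calabi--Yau system and invoking Schauder really closes the bootstrap with constants depending only on $M,\omega,J$ and $\norm{f}_{C^{k,\alpha}(g)}$ — in particular that the Nijenhuis-tensor terms entering the linearization (and the curvature identities) do not disturb the uniform ellipticity of the previous step nor introduce a hidden dependence on $g_1$.
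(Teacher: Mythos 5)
Your proposal is correct and follows essentially the same route as the paper: chain \cref{prop:6}, \cref{prop:7} and \cref{prop:9} to obtain uniform $C^0$, second-order (two-sided metric equivalence $cg\le g_1\le Cg$) and Calabi third-order control of $g_1$, then close with a Schauder bootstrap on the differentiated equation. The only cosmetic difference is that the paper organizes the bootstrap through the pair $(\varphi_1, a_1(\varphi))$ via the elliptic system \labelcref{eq:4.25} and the differentiated Monge--Amp\`ere equation \labelcref{eq:4.26}, rather than directly through the $1$-form $u$ and $\mc{F}(u)=e^f$; the analytical content is the same.
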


\begin{proof}
The argument after \cref{prop:7} shows that $\|g_1\|_{C^0}$ is bounded. Combining this with the previous proposition, one has 
\[
    \norm{g_1}_{C^1(g_1)} \leq C,
\]
for some positive constant $C$ depending only on $M, \omega, J, f$. It remains to prove the higher order estimates. Our approach is along the lines used by Weinkove to prove Theorem~1 in \cite{Weinkove07}.\par

Recall that given a function $\varphi_1$, there exists $a_1(\varphi) \in \Omega_\R^1$ satisfying $\labelcref{eq:4.3}$, $\labelcref{eq:4.4}$, and
\begin{equation} \label{eq:4.25}
    \begin{dcases}
        d_J^- J d\varphi_1 + d_{g_1}^+a_1(\varphi)= 0 \\
        \omega_1 \wedge da_1(\varphi)= 0
    \end{dcases}.
\end{equation}
The system is elliptic due to \cref{prop:3}. Fix any $0 < \alpha < 1$. Since $g_1$ is uniformly bounded in $C^{\alpha}$, we can apply the elliptic Schauder estimates \cite{GilbargTrudinger77} to \cref{eq:4.3} for $s=1$, and \cref{prop:6}, to get a bound $\norm{\varphi_1}_{C^{2+\alpha}} \leq C(M,\omega,J,f)$. Since
\begin{equation*}
    \begin{dcases}
        d_{g_1}^+a_1(\varphi)=-d_J^-Jd\varphi_1\\
        d^{*_{g_1}}a_1(\varphi)=0
    \end{dcases}
\end{equation*}
is an elliptic system, hence $a_1(\varphi)$ is bounded in $C^{2+\alpha}$, and coefficients of the above system have a $C^{\alpha}$ bound. Differentiating the generalized Monge-Amp\`ere equation (real version)
\begin{equation*}
    \log \det g_1 = \log \det g + 2f,
\end{equation*}
we see that
\begin{equation} \label{eq:4.26}
    g_1^{ij} \del_i \del_j (\del_k \varphi_1) + \Set{\text{lower order terms}} = g^{ij} \del_k g_{ij} + 2\del_k f,    
\end{equation}
where the lower order terms may contain up to derivative of $\varphi_1$ or $a_1(\varphi)$. Since the coefficients of this elliptic equation are bounded in $C^{\alpha}$, we can apply the Schaduer estimates again, we get $\norm{\varphi_1}_{C^{3+\alpha}} \leq C(M,\omega,J,f)$. Using \eqref{eq:4.25} implies $\norm{a_1(\varphi)}_{C^{3+\alpha}} \leq C(M,\omega,J,f)$. Thus, $\mc{D}_J^+(\varphi)=dJd\varphi_1+da_1(\varphi)$ is in $C^{3+\alpha}$. Now a bootstrapping argument using \eqref{eq:4.25} and \eqref{eq:4.26} gives the required higher estimates.
\end{proof}

\noindent We are now ready to finish the proof of \cref{thm:1}.
\begin{proof}[Proof of \cref{thm:1}]
    The uniqueness of \cref{eq:1.2} is proved in \cref{thm:3.1}. It remains to show the existence of the solution for \cref{eq:1.2}. This follows from the continuity method. Define $S\subset[0,1]$ as all numbers $t$ such that the equation
    \[
    (\omega+\mc{D}_J^+(\varphi_t))^2=e^{tf}\omega^2
    \]
    has a solution. Notice that $0\in S$, so $S$ is non-empty. By \cref{prop:4}, $S$ is open in $[0,1]$. If $S$ is also closed, then $S=[0,1]$. It follows that \cref{eq:1.2} has a solution when $t=1$.\par
    To show that $S$ is closed. Let $\{\varphi^i\}$ and $\{t_i\}\subset S$ be sequences such that
    \[
    e^{t_if}\omega^2=(\omega+\mc{D}_J^+(\varphi^i))^2=(\omega+\mc{D}_{J,1}^+(\varphi_1^i))^2
    \]
    and $\lim_it_i=t_0\in[0,1]$. Here $\mc{D}_{J,1}^+$ and $\varphi_1^i$ are defined as in \cref{eq:4.3} and \cref{eq:4.4} by replacing $\varphi$ with $\varphi^i$ for $s=1$. The $a\, priori$ estimate from the previous theorem shows that 
     \[
     \|\varphi_1^i\|_{C^2},\|\varphi^i\|_{C^2}\leq C(M,\omega,J,t_if)
     \]
     for all $i$. Because $0\leq t_i\leq1$, there is a constant $C(M,\omega,J,f)$ such that 
     \[
     C(M,\omega,J,t_if)\leq C(M,\omega,J,f),\ \ \forall t_i.
     \]
     According to Arzela–Ascoli theorem, there is a convergent subsequence of $\{\varphi^i\}$, $\{\varphi_1^i\}$ (still denoted as $\{\varphi^i\}$ and $\{\varphi_1^i\}$, resp.) that converge uniformly to a function $\varphi^0$ and $\{\varphi_1^0\}$, resp. Therefore, by letting $i\rightarrow\infty$,
     \[
     (\omega+\mc{D}_J^+(\varphi^0))^2=e^{t_0f}\omega^2.
     \] 
     By \cref{thm:3}, there are $C^\infty$ $a\ priori$ bounds of $\varphi^0$ and $\varphi_1^0$. It follows that there are $a\, priori$ $C^\infty$-bounds of $\mc{D}_J^+(\varphi^0)=dJd\varphi^0_1+da_1(\varphi^0)$. As a result, $t_0\in S$. So $S$ is a closed set. Because $S$ is both open and closed, $S$ must be $[0,1]$. This ends the proof of \cref{thm:1}.
\end{proof}

\section*{acknowledgement}
    The first named author is very grateful to his advisor Z. L\"u for his support; the authors thank Hongyu Wang for suggesting this problem and for many subsequent helpful, insightful, and encouraging discussions; Qiang Tan and Haisheng Liu for some helpful discussions.

\bibliographystyle{amsplain}
\bibliography{ref}

\end{document}